\newtheorem{thm}{Theorem}[subsection]
\newtheorem{prop}[thm]{Proposition}
\newtheorem{lem}[thm]{Lemma}
\newtheorem{cor}[thm]{Corollary}
\newtheorem{mydef}[thm]{Definition}
\theoremstyle{remark}
\newtheorem{rem}[thm]{Remark}
\newtheorem{exm}[thm]{Example}
\title{Connecting $k$-Naples parking functions and obstructed parking functions via involutions}
\author{Roger Tian\footnote{\href{mailto:rgtian@ucdavis.edu}{rgtian@ucdavis.edu} or \href{mailto:htrland@gmail.com}{htrland@gmail.com}}}
\date{\today}
\begin{document}
\maketitle

\begin{abstract}
Parking functions were classically defined for $n$ cars attempting to park on a one-way street with $n$ parking spots, where cars only drive forward. Subsequently, parking functions have been generalized in various ways, including allowing cars the option of driving backward. The set $PF_{n,k}$ of $k$-Naples parking functions have cars who can drive backward a maximum of $k$ steps before driving forward. A recursive formula for $|PF_{n,k}|$ has been obtained, though deriving a closed formula for $|PF_{n,k}|$ appears difficult. In addition, an important subset $B_{n,k}$ of $PF_{n,k}$, called the contained $k$-Naples parking functions, has been shown, with a non-bijective proof, to have the same cardinality as that of the set $PF_n$ of classical parking functions, independent of $k$. 

In this paper, we study $k$-Naples parking functions in the more general context of $m$ cars and $n$ parking spots, for any $m \leq n$. We use various parking function involutions to establish a bijection between the contained $k$-Naples parking functions and the classical parking functions, from which it can be deduced that the two sets have the same number of ties. Then we extend this bijection to inject the set of $k$-Naples parking functions into a certain set of obstructed parking functions, providing an upper bound for the cardinality of the former set.
\end{abstract}

\section{Introduction}
Parking functions are combinatorial objects introduced by Konheim and Weiss \cite{KonWei} in their study of hashing. For the purposes of this paper, a parking function can be defined as follows: Consider $m$ cars $c_1, c_2, \ldots, c_m$ trying to park, in that order, randomly on a one-way street with $n$ parking spots, labeled $1, 2, \ldots, n$. For each $j \in [m]$, car $c_j$ has a preferred parking spot $a_j \in [n]$ and will park there if the spot is unoccupied. If $a_j$ is occupied, then car $c_j$ tries to park at the next parking spot, and this process continues until $c_j$ reaches an unoccupied parking spot and parks there, or it terminates due to lack of available parking spots. This street can be modeled by a directed path $I$ with vertices labeled $1, 2, \ldots, n$, and this sequence $s \in [n]^m$ of car preferences is called a \textbf{classical $(m,n)$-parking function} if all $m$ cars manage to park through this process. Denote by $PF(m,n)$ the set of classical $(m,n)$-parking functions. For $m \leq n$, it has been shown that $|PF(m,n)| = (n-m+1)(n+1)^{m-1}$ \cite{CaJoSch}. 

Classical parking functions have appeared in the study of topics such as tree enumeration \cite{GesSeo}, noncrossing partitions \cite{Stan}, and acyclic functions \cite{Yan15}. They have been generalized in various directions, including parking functions on digraphs \cite{KingYan} and Naples parking functions \cite{Baum}. In this paper, we will explore a generalization of the latter called the $k$-Naples parking functions \cite{Christ}.

Fix integer $0 \leq k \leq n-1$. Consider the following modification to the parking rule for the aforementioned $m$ cars, which will allow the cars to drive first backward (left), then forward (right) before parking. If car $c_j$ finds its preferred vertex $a_j$ occupied, then it travels to the nearest available vertex in the $k$ spots before $a_j$ and parks there if such vertex exists. If all $k$ vertices before $a_j$ are occupied, then $c_j$ drives to the nearest available vertex after $a_j$ and parks there if such vertex exists. The $m$-tuple $(a_1,a_2,\ldots,a_m)$ of car preferences is called a \textbf{$k$-Naples $(m,n)$-parking function} if all $m$ cars are able to park via this process. Denote by $PF(m,n;k)$ the set of $k$-Naples $(m,n)$-parking functions, with $PF(m,n;0) = PF(m,n)$.

There is an important subset $B(m,n;k) \subset PF(m,n;k)$, called the \textbf{contained $k$-Naples parking functions}, consisting of those elements of $PF(m,n;k)$ such that not all vertices before $a_j$ are occupied whenever $a_j \leq k$, for all $j \in [m]$. In other words, no car of $B(m,n;k)$ has to ``exit'' the parking lot $I$ in order to finish its backward search through $k$ vertices for an available spot.

In the case $m = n$, denote $PF_{n,k} := PF(n,n;k)$ and $B_{n,k} := B(n,n;k)$, and $|PF_{n,k}|$ has the following recursive formula \cite{Christ}:
\begin{equation}
\label{kN recur}
|PF_{n+1,k}| = \sum_{i=0}^n{{n \choose i}\min(i+1+k,n+1)|PF_{i,k}||B_{n-i,k}|}.
\end{equation}

It was also proved in \cite{Christ} that $$|B_{n,k}| = (n+1)^{n-1}$$ independent of $k$, using a modified version of Pollack's technique \cite{Poll}. Since this proof was not bijective, the authors posed the open problem of finding a bijection between $B_{n,k}$ and the set $PF_n := PF_{n,0}$ of classical $(n,n)$-parking functions that ``preserves'' in some manner certain parking function statistics such as ascents, descents, and ties. 

In addition, as the authors pointed out, it would be very difficult to derive a closed formula for $|PF_{n,k}|$ using the recursive formula (\ref{kN recur}), given that even special cases of (\ref{kN recur}) include quantities for which no closed formulae are known. Therefore, results on the size of $|PF_{n,k}|$ would be desirable.

Certain parking functions in which some vertices have been obstructed and hence unavailable for parking have been studied in \cite{Ehren}, in the form of cars parking after a fixed trailer. We will tackle the previously raised questions through our effort to connect $k$-Naples parking functions with obstructed parking functions.

In this paper, we will study $k$-Naples parking functions using various involutions on these objects. For general $m \leq n$, we introduce such an involution in Section \ref{cla invo} and apply it to construct a bijection between $B(m,n;k)$ and $PF(m,n)$ in Section \ref{conta bij}. We then use this bijection to deduce that $B(m,n;k)$ and $PF(m,n)$ have the same number of ties, in Section \ref{conta ties}. Lastly, in Section \ref{obstr bound}, we extend this bijection to an injection of $PF(m,n;k)$ into $LPF(m,n+k;k)$, the set of parking functions with $m$ cars and $n+k$ vertices whose first $k$ vertices are obstructed. This will then allow us to obtain the upper bound 
\begin{equation}
\label{gen ub}
|PF(m,n;k)| \leq |LPF(m,n+k;k)|,
\end{equation} 
with equality only if $k=0$.
Finally, in the case $m = n$, we combine (\ref{gen ub}) with a special case of the main result of \cite{Ehren} to obtain 
\begin{equation}
\label{spc ub}
|PF_{n,k}| \leq (k+1)(k+n+1)^{n-1},
\end{equation}
with equality only if $k=0$.

\section{Acknowledgments}
The author would like to thank Pamela Harris for helpful conversations via email.

\section{Preliminaries}
For any two tuples $u, v$ we denote by $u \oplus v$ the concatenation of $u$, $v$ and we denote by $\mathrm{len}(v)$ the number of entries of $v$. Let $m \leq n \in \mathbb{N}$ and let $PF(m,n)$ denote the set of parking functions with $m$ cars and $n$ parking spots--called the classical $(m,n)$-parking functions. We regard the parking lot as a directed path $I$ with vertices $1, 2, \ldots, n$, and we will denote each path on $I$ as a tuple of its vertices. For vertices $a<b \in [n]$ we denote by $\mathrm{path}(a,b)$ the path on $I$ with $a$ and $b$ as left and right endpoints respectively. Now we give the definitions of the relevant parking functions introduced in \cite{Christ}.

\begin{mydef}
Fix integers $1 \leq m \leq n$ and $0 \leq k \leq n-1$. Consider $m$ cars trying to park on a directed path $I$ with $n$ vertices one after another, with car $c_j$ preferring vertex $a_j$. If car $c_j$ finds $a_j$ unoccupied, it parks there. If $a_j$ is occupied, then $c_j$ drives backward from $a_j$ one vertex at a time to check if any vertices in the set $A_{j,k} := \{a_j-1,a_j-2,\ldots,a_j-k\} \cap [n]$ are available. If $A_{j,k}$ has unoccupied vertices, then $c_j$ parks in the available vertex $a_l \in A_{j,k}$ closest to $a_j$. If all vertices in $A_{j,k}$ are occupied, then $c_j$ drives forward and parks in the first unoccupied vertex after $a_j$ if it exists. If all cars are able to park under this rule, then we call the tuple $f = (a_1,a_2,\ldots,a_m)$ of car preferences a \textbf{$k$-Naples $(m,n)$-parking function}. Denote by $PF(m,n;k)$ the set of all $k$-Naples $(m,n)$-parking functions.
\end{mydef}

\begin{mydef}
Fix integers $1 \leq m \leq n$ and $0 \leq k \leq n-1$. The set of \textbf{contained $k$-Naples $(m,n)$-parking functions} $B(m,n;k)$ is the subset of $PF(m,n;k)$ such that if car $c_i$ has preference $a_i \leq k$, then some vertex in $\{1,2,\ldots,a_i\}$ has not been occupied by any car in $\{c_1,c_2,\ldots,c_{i-1}\}$.
\end{mydef}

\begin{exm}
Consider the following directed path $I$. 

\begin{tikzpicture}
\node[shape=circle,draw=black] (1) at (0,0) {1};
\node[shape=circle,draw=black] (2) at (1,0) {2};
\node[shape=circle,draw=black] (3) at (2,0) {3};
\node[shape=circle,draw=black] (4) at (3,0) {4};
\node[shape=circle,draw=black] (5) at (4,0) {5};
\node[shape=circle,draw=black] (6) at (5,0) {6};
\node[shape=circle,draw=black] (7) at (6,0) {7};
\node[shape=circle,draw=black] (8) at (7,0) {8};
\node[shape=circle,draw=black] (9) at (8,0) {9};
\node[shape=circle,draw=black] (10) at (9,0) {10};

\path [->] (1) edge (2);
\path [->] (2) edge (3);
\path [->] (3) edge (4);
\path [->] (4) edge (5);
\path [->] (5) edge (6);
\path [->] (6) edge (7);
\path [->] (7) edge (8);
\path [->] (8) edge (9);
\path [->] (9) edge (10);
\end{tikzpicture}

We have $(2,4,8,9,2,8,9,9,9,3) \in B(10,10;4)$.

We have $(2,4,6,9,2,6,2,9,3) \in PF(9,10;4)-B(9,10;4)$ because cars 7 and 9 would have to exit the parking lot in order to finish their backward search through $k$ vertices.

We have $(2,3,6,9,9,6,8,9,9) \notin PF(9,10;4)$ because car 9 cannot park.
\end{exm}

\begin{mydef}
Let $f = (a_1,a_2,\ldots,a_m) \in PF(m,n;k)$. For all $1 \leq j \leq n-1$, we say that 
\begin{enumerate}
\item $(j,j+1)$ is an \textbf{ascent} if $a_j < a_{j+1}$
\item $(j,j+1)$ is a \textbf{descent} if $a_j > a_{j+1}$
\item $(j,j+1)$ is a \textbf{tie} if $a_j = a_{j+1}$.
\end{enumerate}
\end{mydef}

For the parking functions with obstructions we consider here, the parking rule is the same as that of the classical parking functions, but just with certain additional vertices unavailable for parking though they can still be preferred. 

\begin{mydef}
Fix integers $1 \leq m \leq n$ and $0 \leq k \leq n-1$. Consider $m$ cars trying to park on a directed path $I$ with $n+k$ vertices one after another, where certain predetermined $k$ consecutive vertices are occupied by obstructions and hence unavailable for parking. Each car $c_j$ will park at its preferred vertex $a_j$ if the spot is unoccupied. If $a_j$ is occupied either by an obstruction or by an earlier car, then $c_j$ will keep driving forward from $a_j$ until it finds an available vertex and parks there, or the process terminates due to lack of available vertices. The sequence $f = (a_1,a_2,\ldots,a_m)$ of car preferences is called a \textbf{$k$-consecutive obstructed $(m,n)$-parking function} if all cars successfully park through this process. Denote by $OPF(m,n+k;k)$ the set of $k$-consecutive obstructed $(m,n)$-parking functions.
\end{mydef}

\begin{mydef}
Fix integers $1 \leq m \leq n$ and $0 \leq k \leq n-1$. The set $LPF(m,n+k;k)$ of \textbf{$k$-left obstructed $(m,n)$-parking functions} is the subset of $OPF(m,n+k;k)$ such that the first $k$ vertices are obstructed.
\end{mydef}

\begin{exm}
Consider the following directed path $I$, where the obstructed vertices are labeled with a prime ('). \\ 
\begin{tikzpicture}
\node[shape=circle,draw=black] (1) at (0,0) {1};
\node[shape=circle,draw=black] (2) at (1,0) {2};
\node[shape=circle,draw=black] (3) at (2,0) {3};
\node[shape=circle,draw=black] (4) at (3,0) {4};
\node[shape=circle,draw=black] (5) at (4,0) {5};
\node[shape=circle,draw=black] (6) at (5,0) {6};
\node[shape=circle,draw=black] (7) at (6,0) {7};
\node[shape=circle,draw=black] (8) at (7,0) {8};
\node[shape=circle,draw=black] (9) at (8,0) {9'};
\node[shape=circle,draw=black] (10) at (9,0) {10'};
\node[shape=circle,draw=black] (11) at (10,0) {11'};
\node[shape=circle,draw=black] (12) at (11,0) {12'};
\node[shape=circle,draw=black] (13) at (12,0) {13};
\node[shape=circle,draw=black] (14) at (13,0) {14};

\path [->] (1) edge (2);
\path [->] (2) edge (3);
\path [->] (3) edge (4);
\path [->] (4) edge (5);
\path [->] (5) edge (6);
\path [->] (6) edge (7);
\path [->] (7) edge (8);
\path [->] (8) edge (9);
\path [->] (9) edge (10);
\path [->] (10) edge (11);
\path [->] (11) edge (12);
\path [->] (12) edge (13);
\path [->] (13) edge (14);
\end{tikzpicture}

We have $(1,1,2,3,5,5,5,10,12,8) \in OPF(10,14;4)$ but we have $(3,3,1,1,6,7,8,9,8,8) \notin OPF(10,14;4)$.
\end{exm}

\begin{exm}
Consider the following directed path $I$, where the obstructed vertices are labeled with a prime ('). \\ 
\begin{tikzpicture}
\node[shape=circle,draw=black] (1) at (0,0) {1'};
\node[shape=circle,draw=black] (2) at (1,0) {2'};
\node[shape=circle,draw=black] (3) at (2,0) {3'};
\node[shape=circle,draw=black] (4) at (3,0) {4'};
\node[shape=circle,draw=black] (5) at (4,0) {5};
\node[shape=circle,draw=black] (6) at (5,0) {6};
\node[shape=circle,draw=black] (7) at (6,0) {7};
\node[shape=circle,draw=black] (8) at (7,0) {8};
\node[shape=circle,draw=black] (9) at (8,0) {9};
\node[shape=circle,draw=black] (10) at (9,0) {10};
\node[shape=circle,draw=black] (11) at (10,0) {11};
\node[shape=circle,draw=black] (12) at (11,0) {12};
\node[shape=circle,draw=black] (13) at (12,0) {13};
\node[shape=circle,draw=black] (14) at (13,0) {14};

\path [->] (1) edge (2);
\path [->] (2) edge (3);
\path [->] (3) edge (4);
\path [->] (4) edge (5);
\path [->] (5) edge (6);
\path [->] (6) edge (7);
\path [->] (7) edge (8);
\path [->] (8) edge (9);
\path [->] (9) edge (10);
\path [->] (10) edge (11);
\path [->] (11) edge (12);
\path [->] (12) edge (13);
\path [->] (13) edge (14);
\end{tikzpicture}

We have $(7,8,8,2,3,12,11,11,14,4) \in LPF(10,14;4)$ but we have $(4,4,8,8,9,13,12,13,12)$ $\notin LPF(9,14;4)$.
\end{exm}

\section{Contained $k$-Naples parking functions}
We will construct a bijection $\Xi_{m,n;k}$ between $B(m,n;k)$ and $PF(m,n)$ that ``preserves'' the distance traversed by each car before parking, taking into consideration the cars that park at or before their preferred spots and the cars that park after. Here, the cars of $B(m,n;k)$ that park backward will correspond to the cars of $PF(m,n)$ traversing no more than $k$ vertices, while the cars of $B(m,n;k)$ that park forward will correspond to the cars of $PF(m,n)$ traversing more than $k$ vertices. To deal with these two types of cars, it is useful to first construct an involution $\Phi_{m,n}$ on $PF(m,n)$ that, in some sense, ``inverts'' the configuration of the parked cars. Each $f \in B(m,n;k)$ will be broken up into a sequence of sub-tuples, consisting of the preferences of backward parkers, the forward parkers, the backward parkers, and so on. $\Xi_{m,n;k}(f)$ will be constructed one sub-tuple at a time, with $\Phi_{m,n}$ being applied each time we move onto the next sub-tuple.

\subsection{Involution on classical $(m,n)$-parking functions}
\label{cla invo}
We introduce an involution $\Phi_{m,n}$ on $PF(m,n)$ that ``reflects'' the parking configuration; call $\Phi_{m,n}$ the \textbf{parking reflection on $PF(m,n)$}. Let $f \in PF(m,n)$. For each car $c \in [m]$, let $\mathrm{tpath}(c)$ denote the path car $c$ must traverse before parking; we call $\mathrm{tpath}(c)$ the \textbf{traverse path of $c$}.

For vertices $i, j \in [n]$, we write $i \sim j$ if there exist cars $c_1, c_2, \ldots, c_k$ such that $\mathrm{tpath}(c_1)$ contains $i$, $\mathrm{tpath}(c_k)$ contains $j$, and $\mathrm{tpath}(c_l) \cap \mathrm{tpath}(c_{l+1}) \neq \emptyset$. $\sim$ is a relation that yields ``path components'' that we will call \textbf{parking components}; every two vertices in a parking component are connected by a sequence of overlapping traverse paths. We will denote a parking component as a path, which is the union of all the relevant traverse paths.

Each parking function consists of a collection of parking components. Notice that, for any parking component $L$, the cars preferring $L$ never leave $L$, and the cars preferring $I-L$ never enter $L$. To define $\Phi_{m,n}$, the idea is to reflect each parking component about the center of $I$ while preserving the order of preferences of all cars that park along the parking component. 

\begin{mydef}[Parking reflection $\Phi_{m,n}$]
Given a parking function $f \in PF(m,n)$, define $\Phi_{m,n}(f)$ as follows. Let $L = (i,i+1,i+2,\ldots,i+j)$ be a parking component of $f$, with its reflection $(n-i+1,n-i,n-i-1,\ldots,n-i-j+1)$ which we rewrite as the path $(n-i-j+1,n-i-j+2,\ldots,n-i+1)$. For any car $c$ preferring $i+a$ in $f$ for $0 \leq a \leq j$, $c$ will prefer $n-i-j+a+1$ in $\Phi_{m,n}(f)$.
\end{mydef}
Thus defined, $\Phi_{m,n}$ is clearly an involution on $PF(m,n)$, which also preserves the traverse path length of each car.

\begin{exm}
Consider the following directed path $I$. 

\begin{tikzpicture}
\node[shape=circle,draw=black] (1) at (0,0) {1};
\node[shape=circle,draw=black] (2) at (1,0) {2};
\node[shape=circle,draw=black] (3) at (2,0) {3};
\node[shape=circle,draw=black] (4) at (3,0) {4};
\node[shape=circle,draw=black] (5) at (4,0) {5};
\node[shape=circle,draw=black] (6) at (5,0) {6};
\node[shape=circle,draw=black] (7) at (6,0) {7};
\node[shape=circle,draw=black] (8) at (7,0) {8};
\node[shape=circle,draw=black] (9) at (8,0) {9};
\node[shape=circle,draw=black] (10) at (9,0) {10};

\path [->] (1) edge (2);
\path [->] (2) edge (3);
\path [->] (3) edge (4);
\path [->] (4) edge (5);
\path [->] (5) edge (6);
\path [->] (6) edge (7);
\path [->] (7) edge (8);
\path [->] (8) edge (9);
\path [->] (9) edge (10);
\end{tikzpicture}

For the parking function $(4,9,6,8,1,8,1,2)$, the traverse path of car 6 is $\mathrm{tpath}(6) = (8,9,10)$. Since the parking components are $(1,2,3)$, $(4)$, $(6)$, and $(8,9,10)$, we have $\Phi_{m,n}(4,9,6,8,1,8,1,2)$ $= (7,2,5,1,8,1,8,9)$. 

For the parking function $(2,3,4,2,4,7,7,8)$, the traverse path of car 4 is $\mathrm{tpath}(4) = (2,3,4,5)$. Since the parking components are $(2,3,4,5,6)$ and $(7,8,9)$, we have $\Phi_{m,n}(2,3,4,2,4,7,7,8) = (5,6,7,5,7,2,2,3)$.
\end{exm}

\subsection{Bijection between classical parking functions and contained $k$-Naples parking functions}
\label{conta bij}
Fix $0 \leq k \leq n-1$ and let $B(m,n;k)$ denote the set of contained $k$-Naples parking functions with $m$ cars. We will utilize the involutions $\Phi_{i,n}$ for various $i \in [m]$ to establish a bijection $\Xi_{m,n;k}$ between $B(m,n;k)$ and $PF(m,n)$; constructing an element of $PF(m,n)$ from an element of $B(m,n;k)$ will involve multiple ``reflections'' of parking configurations, depending on whether cars park backward or forward.

Let $f \in B(m,n;k)$. Note that cars in $f$ never start by parking forward. We partition the $m$-tuple $f$ into sub-tuples $f = f_1 \oplus f_2 \oplus \cdots \oplus f_d$ following their order in $f$ so that: 
\begin{enumerate}
\item \label{odd} $f_i$ consists of cars parking at or before their preferred spots for $i$ odd.
\item \label{even} $f_i$ consists of cars parking after their preferred spots for $i$ even.
\end{enumerate}
We will henceforth call $f = f_1 \oplus f_2 \oplus \cdots \oplus f_d$ the \textbf{$k$-decomposition of $f$}. The cars in Case \ref{odd} will correspond to the cars with traverse paths of length not exceeding $k$ in $\Xi_{m,n;k}(f)$, while the cars in Case \ref{even} will correspond to the cars with traverse paths of length exceeding $k$ in $\Xi_{m,n;k}(f)$. \\

We construct $\Xi_{m,n;k}(f)$ in stages by specifying the new preferences of the cars for each of $f_1, f_2, \ldots$ in that order; denote by $\Xi_{m,n;k}(f)_i$ the parking function obtained after $f_i$. See Remark \ref{parkrem} for a rough picture of this procedure.
\begin{enumerate}
\item For each car $c_1$ with preference $p_1 \in [n]$ in $f_1$, $c_1$ will have new preference $n+1-p_1$. This yields a classical $(\mathrm{len}(f_1),n)$-parking function $\Xi_{m,n;k}(f)_1$. 
\item For each car $c_2$ with preference $p_2$ in $f_2$, $c_2$ will have new preference $p_2-k$. This yields a $\mathrm{len}(f_2)$-tuple $\nu_2$ of preferences, and we define $\Xi_{m,n;k}(f)_2 = \Phi_{\mathrm{len}(f_1),n}(\Xi_{m,n;k}(f)_1) \oplus \nu_2$.
\item For each car $c_3$ with preference $p_3$ in $f_3$, $c_3$ will have new preference $n+1-p_3$. This yields a $\mathrm{len}(f_3)$-tuple $\nu_3$ of preferences, and we define $\Xi_{m,n;k}(f)_3 = \Phi_{\mathrm{len}(f_1)+\mathrm{len}(f_2),n}(\Xi_{m,n;k}(f)_2) \oplus \nu_3$. 
\item Continue this process as follows.
\begin{enumerate}
\item Suppose $i$ is odd. For each car $c_i$ with preference $p_i$ in $f_i$, $c_i$ will have new preference $n+1-p_i$. This yields a $\mathrm{len}(f_i)$-tuple $\nu_i$ of preferences, and we define $$\Xi_{m,n;k}(f)_i = \Phi_{\sum_{u=1}^{i-1}{\mathrm{len(f_u)}},n}(\Xi_{m,n;k}(f)_{i-1}) \oplus \nu_i.$$
\item Suppose $i$ is even. For each car $c_i$ with preference $p_i$ in $f_i$, $c_i$ will have new preference $p_i-k$. This yields a $\mathrm{len}(f_i)$-tuple $\nu_i$ of preferences, and we define $$\Xi_{m,n;k}(f)_i = \Phi_{\sum_{u=1}^{i-1}{\mathrm{len(f_u)}},n}(\Xi_{m,n;k}(f)_{i-1}) \oplus \nu_i.$$
\end{enumerate}
\item Finally, we define $\Xi_{m,n;k}(f) := \Xi_{m,n;k}(f)_d$.
\end{enumerate}
\begin{rem}
\label{parkrem}
Roughly speaking, $\Xi_{m,n;k}(f)$ is constructed in stages $\Xi_{m,n;k}(f)_1$, $\Xi_{m,n;k}(f)_2$, $\ldots$, $\Xi_{m,n;k}(f)_d = \Xi_{m,n;k}(f)$, where the $\Xi_{m,n;k}(f)_i$ are as follows: 
\begin{enumerate}
\item The vertices occupied by cars in $\Xi_{m,n;k}(f)_1$ are the reflection of those in $f_1$.
\item The vertices occupied by cars in $\Xi_{m,n;k}(f)_2$ coincide with those in $f_1 \oplus f_2$.
\item In general, the vertices occupied by cars in $\Xi_{m,n;k}(f)_i$
\begin{enumerate}
\item coincide with those in $f_1 \oplus f_2 \oplus \ldots \oplus f_i$ for $i$ even
\item are the reflection of those in $f_1 \oplus f_2 \oplus \ldots \oplus f_i$ for $i$ odd
\end{enumerate}
\end{enumerate}
\end{rem}

\begin{exm}
Consider the following directed path $I$. 

\begin{tikzpicture}
\node[shape=circle,draw=black] (1) at (0,0) {1};
\node[shape=circle,draw=black] (2) at (1,0) {2};
\node[shape=circle,draw=black] (3) at (2,0) {3};
\node[shape=circle,draw=black] (4) at (3,0) {4};
\node[shape=circle,draw=black] (5) at (4,0) {5};
\node[shape=circle,draw=black] (6) at (5,0) {6};
\node[shape=circle,draw=black] (7) at (6,0) {7};
\node[shape=circle,draw=black] (8) at (7,0) {8};
\node[shape=circle,draw=black] (9) at (8,0) {9};
\node[shape=circle,draw=black] (10) at (9,0) {10};

\path [->] (1) edge (2);
\path [->] (2) edge (3);
\path [->] (3) edge (4);
\path [->] (4) edge (5);
\path [->] (5) edge (6);
\path [->] (6) edge (7);
\path [->] (7) edge (8);
\path [->] (8) edge (9);
\path [->] (9) edge (10);
\end{tikzpicture}

For $f = (6,6,5,4,5,6,7,7) = (6,6,5,4,5) \oplus (6,7,7) \in B(8,10;4)$, we have $\Xi_{8,10;4}(f)_1 = (5,5,6,7,6)$ and $\Xi_{8,10;4}(f) = \Xi_{8,10;4}(f)_2 = \Phi_{5,10}(5,5,6,7,6) \oplus (2,3,3) = (2,2,3,4,3,2,3,3)$.

For $f = (5,5,4,4,3,5,10,6,10) = (5,5,4,4,3) \oplus (5) \oplus (10) \oplus (6) \oplus (10) \in B(9,10;4)$, we have $\Xi_{9,10;4}(f)_1 = (6,6,7,7,8)$, $\Xi_{9,10;4}(f)_2 = \Phi_{5,10}(6,6,7,7,8) \oplus (1) = (1,1,2,2,3,1)$, $\Xi_{9,10;4}(f)_3 = \Phi_{6,10}(1,1,2,2,3,1) \oplus (1) = (5,5,6,6,7,5,1)$, $\Xi_{9,10;4}(f)_4 = \Phi_{7,10}(5,5,6,6,7,5,1) \oplus (2) = (1,1,2,2,3,1,10,2)$, and $\Xi_{9,10;4}(f) = \Xi_{9,10;4}(f)_5 = \Phi_{8,10}(1,1,2,2,3,1,10,2) \oplus (1) = (4,4,5,5,6,4,1,5,1)$.
\end{exm}

\begin{thm}
\label{cncorr}
$\Xi_{m,n;k} : B(m,n;k) \rightarrow PF(m,n)$ is a bijection for any $m \leq n \in \mathbb{N}$ and $0 \leq k \leq n-1$.
\end{thm}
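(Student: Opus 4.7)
My plan is to prove Theorem \ref{cncorr} by establishing, by induction on the stage index $i$, a strengthened version of the invariant in Remark \ref{parkrem}: $\Xi_{m,n;k}(f)_i$ is a classical $(\sum_{u=1}^{i} \mathrm{len}(f_u), n)$-parking function, its set of occupied vertices either coincides with (when $i$ is even) or is the reflection across the center of $I$ of (when $i$ is odd) the set of vertices occupied in $f_1 \oplus \cdots \oplus f_i$ under the $k$-Naples rule, and for each $u \le i$ the cars originating in $f_u$ have classical traverse distance at most $k$ if $u$ is odd and exceeding $k$ if $u$ is even. Well-definedness of $\Xi_{m,n;k}$ then follows from the case $i = d$.

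For the base case $i = 1$, each car in $f_1$ is at-preferred or backward, parking at some $q \le p$ with $p - q \le k$. The transformed preference $n+1-p$ in the classical setting meets the reflections of $p, p-1, \ldots, q+1$, all of which are occupied, before reaching the unoccupied vertex $n+1-q$, giving traverse distance $p - q \le k$. For the inductive step, I would use that $\Phi$ preserves every car's traverse distance and swaps the two cases of the invariant. When $i > 1$ is odd, the cars appended via $p \mapsto n+1-p$ behave as in the base case. When $i$ is even, the containment condition of $B(m,n;k)$ forces each forward parker in $f_i$ to have preference $p > k$, so the new preference $p - k$ lies in $[n]$; since the $k+1$ vertices $p - k, p - k + 1, \ldots, p$ are all occupied at that stage (matching the $k$-Naples occupied set by induction), the new car drives forward past them and parks at the same vertex as in $f$, giving classical traverse distance $(q - p) + k > k$.

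For bijectivity, I would construct an inverse by undoing the stages. Given $g \in PF(m,n)$, uniquely partition $g = g_1 \oplus \cdots \oplus g_d$ so that each $g_i$ is a maximal consecutive run of cars whose classical traverse distance is at most $k$ (for odd $i$) or exceeds $k$ (for even $i$); the invariant ensures this matches the $k$-decomposition of any preimage. Then I would reverse the stages: strip the last $\mathrm{len}(g_d)$ entries $\nu_d$ from $g$, apply $\Phi_{\sum_{u=1}^{d-1} \mathrm{len}(g_u), n}$ to the remaining prefix to recover $\Xi_{m,n;k}(f)_{d-1}$ (using that $\Phi$ is an involution), and invert $\nu_d$ by $q \mapsto n+1-q$ (for odd $d$) or $q \mapsto q+k$ (for even $d$) to recover $f_d$. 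Iterating down to $i = 1$ produces the preimage $f$, and the two maps are mutually inverse by construction.

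The main obstacle is verifying that the recovered $f$ lies in $B(m,n;k)$ and not merely in $PF(m,n;k)$: we must confirm both that $f$ parks successfully under the $k$-Naples rule and that it satisfies the containment condition. Both follow from the invariant once it is established. For the containment condition, any recovered car with preference $a_j \le k$ cannot be a forward parker in $f$, because a forward parker comes from an even block with preference of the form $q + k$ for some $q \ge 1$, hence with $a_j \ge k + 1$; so such a car is at-preferred or backward, parking at some vertex $q \le a_j$, which witnesses an unoccupied vertex in $\{1, \ldots, a_j\}$ at the moment the car parks.
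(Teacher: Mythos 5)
Your proposal is correct and follows essentially the same route as the paper: the paper's proof likewise rests on the occupied-set invariant (occupied vertices of $\Xi_{m,n;k}(f)$ coincide with those of $f$ for $d$ even and are their reflection for $d$ odd) and constructs the inverse by stripping maximal suffix runs of cars with traverse path length at most $k$ versus at least $k+1$, undoing $p \mapsto n+1-p$ or $p \mapsto p-k$, and applying the involution $\Phi$ before recursing. Your version is in fact more complete than the paper's, which leaves the stage-by-stage invariant and the verification that the recovered tuple lies in $B(m,n;k)$ (including the containment condition, which you check explicitly via the bound $p > k$ for forward parkers) as an ``easy to check'' remark.
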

\begin{proof}
It is easy to check that $\Xi_{m,n;k}(f) \in PF(m,n)$ for all $f \in B(m,n;k)$, where the occupied vertices in $\Xi_{m,n;k}(f)$ are
\begin{enumerate}[(a)]
\item the same as those of $f$ if $d$ is even
\item the reflection of those of $f$ if $d$ is odd
\end{enumerate}

We construct $\Xi_{m,n;k}^{-1}$ by reversing the procedure defining $\Xi_{m,n;k}$. Let $g \in PF(m,n)$.
We produce a sequence $\Xi_{m,n;k}^{-1}(g)_1$, $\Xi_{m,n;k}^{-1}(g)_2$, $\ldots$, $\Xi_{m,n;k}^{-1}(g)_a$ of progressively longer tuples with $\Xi_{m,n;k}^{-1}(g) = \Xi_{m,n;k}^{-1}(g)_a$. \\

Break up the $m$-tuple $g$ into sub-tuples $g = \tilde{g}_1 \oplus g_1$ satisfying one of the following:
\begin{enumerate}
\item\label{revcase1} All cars of $g_1$ have traverse path length not exceeding $k$, while the last car of $\tilde{g}_1$ has traverse path length at least $k+1$.
\item\label{revcase2} All cars of $g_1$ have traverse path length at least $k+1$, while the last car of $\tilde{g}_1$ has traverse path length not exceeding $k$.
\end{enumerate}
$\Xi_{m,n;k}^{-1}(g)_1$ will be a $\mathrm{len}(g_1)$-tuple defined in terms of $g_1$. In Case \ref{revcase1}, each car $c_1$ with preference $p_1$ in $g_1$ will have new preference $n+1-p_1$ in $\Xi_{m,n;k}^{-1}(g)_1$. In Case \ref{revcase2}, each car $c_1$ with preference $p_1$ in $g_1$ will have new preference $p_1+k$ in $\Xi_{m,n;k}^{-1}(g)_1$. \\

Partition $\Phi_{\mathrm{len}(g)-\mathrm{len}(g_1),n}(\tilde{g}_1)$ into sub-tuples $\Phi_{\mathrm{len}(g)-\mathrm{len}(g_1),n}(\tilde{g}_1) = \tilde{g}_2 \oplus g_2$ satisfying one of the following:
\begin{enumerate}[i.]
\item\label{revcase3} All cars of $g_2$ have traverse path length not exceeding $k$, while the last car of $\tilde{g}_2$ has traverse path length at least $k+1$.
\item\label{revcase4} All cars of $g_2$ have traverse path length at least $k+1$, while the last car of $\tilde{g}_2$ has traverse path length not exceeding $k$.
\end{enumerate}
Suppose Case \ref{revcase3} is satisfied. Each car $c_2$ with preference $p_2$ in $g_2$ will have new preference $n+1-p_2$. This yields a $\mathrm{len}(g_{2})$-tuple $\mu_2$ of preferences, and we define $\Xi_{m,n;k}^{-1}(g)_2 = \mu_2 \oplus \Xi_{m,n;k}^{-1}(g)_1$. Suppose Case \ref{revcase4} is satisfied. Each car $c_2$ with preference $p_2$ in $g_2$ will have new preference $p_2+k$. This yields a $\mathrm{len}(g_{2})$-tuple $\mu_2$ of preferences, and we define $\Xi_{m,n;k}^{-1}(g)_2 = \mu_2 \oplus \Xi_{m,n;k}^{-1}(g)_1$. \\

Partition $\Phi_{\mathrm{len}(g)-\mathrm{len}(\Xi_{m,n;k}^{-1}(g)_2),n}(\tilde{g}_2)$ into sub-tuples $\Phi_{\mathrm{len}(g)-\mathrm{len}(\Xi_{m,n;k}^{-1}(g)_2),n}(\tilde{g}_2) = \tilde{g}_3 \oplus g_3$ and continue this process until $\tilde{g}_a = \emptyset$ and we obtain $\Xi_{m,n;k}^{-1}(g)_a = \mu_a \oplus \Xi_{m,n;k}^{-1}(g)_{a-1}$.
\end{proof}
\begin{exm}
Consider the following directed path $I$. 

\begin{tikzpicture}
\node[shape=circle,draw=black] (1) at (0,0) {1};
\node[shape=circle,draw=black] (2) at (1,0) {2};
\node[shape=circle,draw=black] (3) at (2,0) {3};
\node[shape=circle,draw=black] (4) at (3,0) {4};
\node[shape=circle,draw=black] (5) at (4,0) {5};
\node[shape=circle,draw=black] (6) at (5,0) {6};
\node[shape=circle,draw=black] (7) at (6,0) {7};
\node[shape=circle,draw=black] (8) at (7,0) {8};
\node[shape=circle,draw=black] (9) at (8,0) {9};
\node[shape=circle,draw=black] (10) at (9,0) {10};

\path [->] (1) edge (2);
\path [->] (2) edge (3);
\path [->] (3) edge (4);
\path [->] (4) edge (5);
\path [->] (5) edge (6);
\path [->] (6) edge (7);
\path [->] (7) edge (8);
\path [->] (8) edge (9);
\path [->] (9) edge (10);
\end{tikzpicture}

For $k=4$ and $g = (2,2,3,4,3,2,3,3) = (2,2,3,4,3) \oplus (2,3,3) \in P(8,10)$, we have $\Xi_{8,10;4}^{-1}(g)_1$ $= (6,7,7)$ and $\Xi_{8,10;4}^{-1}(g) = \Xi_{8,10;4}^{-1}(g)_2 = (6,6,5,4,5) \oplus (6,7,7) = (6,6,5,4,5,6,7,7)$.

For $k=3$ and $g = (1,2,4,3,5,1,5) = (1,2,4,3,5) \oplus (1) \oplus (5) \in PF(7,10)$, we have $\Xi_{7,10;3}^{-1}(g)_1 = (6)$, $\Xi_{7,10;3}^{-1}(g)_2 = (8) \oplus (6) = (8,6)$, and $\Xi_{7,10;3}^{-1}(g) = \Xi_{7,10;3}^{-1}(g)_3 = (5,6,8,7,9) \oplus (8,6) = (5,6,8,7,9,8,6)$.
\end{exm}

Now we introduce some notations that will be used in Section \ref{obstr bound}, which can be read immediately after without referring to Section \ref{conta ties}. Fix $f \in PF(m,n;k)$. For each car $c \in [m]$, let $\mathrm{tpath}^N(c)$ denote the (undirected) path car $c$ traverses before parking; we call $\mathrm{tpath}^N(c)$ the \textbf{Naples traverse path of $c$}. In this case, $\mathrm{tpath}^N(c)$ will also include vertices left of the preferred spot if car $c$ has to travel backward.

For vertices $i, j \in [n]$, we write $i \sim^N j$ if there exist cars $c_1, c_2, \ldots, c_a$ such that $\mathrm{tpath}^N(c_1)$ contains $i$, $\mathrm{tpath}^N(c_a)$ contains $j$, and $\mathrm{tpath}^N(c_l) \cap \mathrm{tpath}^N(c_{l+1}) \neq \emptyset$. The equivalence relation $\sim^N$ yields ``path components'' that we shall call \textbf{Naples components}. We denote a Naples component as a path, which is the union of all the relevant Naples traverse paths.

\begin{exm}
Let $k=4$ and consider $f = (4,4,7,1,1,9,10,10,1) \in PF(9,10;4)$.

We have $\mathrm{tpath}^N(5) = (1,2)$, $\mathrm{tpath}^N(8) = (8,9,10)$, and $\mathrm{tpath}^N(9) = (1,2,3,4,5)$.

The Naples components of $f$ are $(1,2,3,4,5)$, $(7)$, and $(8,9,10)$. 
\end{exm}

\subsection{$\Xi_{m,n;k}$ Preserves Ties in $B(m,n;k)$}
\label{conta ties}
Even though the correspondence $\Xi_{m,n;k}$ does not necessarily preserve the number of ties in a given $f \in B(m,n;k)$, we can show that $B(m,n;k)$ and $\Xi_{m,n;k}[B(m,n;k)] = PF(m,n)$ have the same number of ties, from which it follows that $B_{n,k}$ and $PF_n$ do as well. For $f = f_1 \oplus f_2 \oplus \cdots \oplus f_d \in B(m,n;k)$ in $k$-decomposition, observe the following about how the number of ties may differ between $f$ and $\Xi_{m,n;k}(f)$. An increase or decrease in the number of ties as a result of applying $\Xi_{m,n;k}$ can only occur at the car pairs $(a-1,a)$, for some $a \in [m]$ such that $f(a)$ is the left endpoint of some sub-tuple $f_i$. Hence we only need to examine the cars $b_1,b_2,\ldots,b_{d-1}$, where $f(b_i)$ is the left endpoint of $f_{i+1}$, for possible changes to the number of ties. 

To record the change in ties due to $\Xi_{m,n;k}$, we define the \textbf{tie-change tuple} as $\Delta\mathrm{ties}(\Xi_{m,n;k},f)$ $= (e_1,e_2,\ldots,e_{d-1})$, where 
\[   
e_i = 
     \begin{cases}
       1 &\quad\text{if }(b_i-1,b_i) \text{ is a tie in }\Xi_{m,n;k}(f) \text{ but not in }f \\
      -1 &\quad\text{if }(b_i-1,b_i) \text{ is a tie in }f \text{ but not in }\Xi_{m,n;k}(f)\\
       0 &\quad\text{if }(b_i-1,b_i) \text{ is a tie in both } f \text{ and }\Xi_{m,n;k}(f) \text{ or a tie in neither of them}\\
     \end{cases}.
\]
For even $i$, $(b_i-1,b_i)$ cannot be a tie in $f$ because car $b_i-1$ parks forward whereas car $b_i$ parks backward, which means that $(b_i-1,b_i)$ cannot be a tie in $\Xi_{m,n;k}(f)$ either. Hence $e_i = 0$ whenever $i$ is even.

Let $a < b \in [n]$ where $a$ and $b$ are the left and right endpoints of Naples components respectively. We introduce an involution $\Phi_{m,n}^{a,b}$ on $B(m,n;k)$ that reflects the Naples components on $\mathrm{path}(a,b)$, in the same spirit as $\Phi_{m,n}$. Let $L = (i,i+1,i+2,\ldots,i+j)$ be any Naples component on $\mathrm{path}(a,b)$. For any car $c$ preferring $i+d'$ in $f$, for $0 \leq d' \leq j$, $c$ will prefer $b+a-i-j+d'$ in $\Phi_{m,n}^{a,b}(f)$. Any car $c'$ with preference outside $\mathrm{path}(a,b)$ in $f$ will have the same preference in $\Phi_{m,n}^{a,b}(f)$. 

We will show that $B(m,n;k)$ and $PF(m,n)$ have the same number of ties, by defining an involution $\Psi_{m,n;k}$ on $B(m,n;k)$ with the property that $$\Delta\mathrm{ties}(\Xi_{m,n;k},\Psi_{m,n;k}(f)) = -\Delta\mathrm{ties}(\Xi_{m,n;k},f).$$ To aid us in this goal, we first define a simple involution $\psi_{m,n;k}$ on $B(m,n;k)$ that only negates the last component $e_{d-1}$ of $\Delta\mathrm{ties}(\Xi_{m,n;k},f)$.

For simplicity of notation, we denote by $f \mid' i$ the restriction $f \mid [\sum_{j=1}^{i}{\mathrm{len(f_j)}}]$; in other words, $f \mid' i$ is $f$ restricted to the cars before $f_{i+1}$. For any car $c \in [m]$, let $\mathrm{tcomp}(c)$ denote the concatenation of all the Naples components that intersect $\mathrm{tpath}^N(c)$ but are not strictly right of $f(c)$, and let $c^<$ and $c^>$ denote the left and right endpoints of $\mathrm{tcomp}(c)$ respectively.

\begin{exm}
Let $m=10$, $n=10$, and $k=3$.

For $f = (5,5,5,5,4,4,8,8,8,8) \in B(10,10;3)$, we have $f = (5,5,5,5,4) \oplus (4) \oplus (8,8) \oplus (8,8)$ in $3$-decomposition, $\mathrm{tpath}^N(6) = (1,2,3,4,5,6)$, $f \mid' 3 = (5,5,5,5,4,4,8,8)$, and the Naples components of $f \mid' 3$ are $(1,2,3,4,5,6)$ and $(7,8)$. We have $b_3 = 9$, $\Xi_{10,10;3}(f) = (1,1,1,1,2,1,7,7,5,5)$, and $\Delta\mathrm{ties}(\Xi_{10,10;3},f) = (-1,0,-1)$. Car $9$ has $\mathrm{tcomp}(9) = (1,2,3,4,5,$ $6) \oplus (7,8) = (1,2,3,4,5,6,7,$ $8)$, $9^< = 1$, and $9^> = 8$. We have $\Phi_{10,10}^{9^<,9^>}(f\mid'3) = (7,7,7,7,6,6,2,$ $2)$, whose Naples components are $(1,2)$ and $(3,4,5,6,7,8)$.
\end{exm}

\begin{exm}
Let $m=10$, $n=11$, and $k=3$.

For $f = (5,5,5,5,4,4,8,9,8,8) \in B(10,11;3)$, we have $f = (5,5,5,5,4) \oplus (4) \oplus (8,9,8) \oplus (8)$ in $3$-decomposition, $\mathrm{tpath}^N(10) = (5,6,7,8,9,10)$. We have $b_3 = 10$, $\Xi_{10,11;3}(f) = (1,1,1,1,2,1,7,$ $9,7,5)$, and $\Delta\mathrm{ties}(\Xi_{10,11;3},f) = (-1,0,-1)$. Car $10$ has $\mathrm{tcomp}(10) = (1,2,3,4,5,6) \oplus (7,8) = (1,2,3,4,5,6,7,8)$, $10^< = 1$, and $10^> = 8$. Note that the Naples component $(9)$ of $f \mid' 3$ is not included in $\mathrm{tcomp}(10)$ because it is strictly to the right of $f(10) = 8$.
\end{exm}

To give $\psi_{m,n;k}(f)$, we consider the Naples traverse path of car $b_{d-1}$; for a rough picture of how $\psi_{m,n;k}$ acts on $f$, see Remark \ref{lastneg}. We define
\[   
\psi_{m,n;k}(f) = 
     \begin{cases}
       \Phi_{m,n}^{b_{d-1}^<,b_{d-1}^>}(f \mid' d-1) \oplus \gamma(b_{d-1})_1 &\quad\text{if } e_{d-1} = 1\\
       \Phi_{m,n}^{b_{d-1}^<,b_{d-1}^>}(f \mid' d-1) \oplus \gamma(b_{d-1})_2 &\quad\text{if } e_{d-1} = -1\\
       f &\quad\text{if } e_{d-1} = 0\\
     \end{cases},
\]
where the $\mathrm{len}(f_d)$-tuples $\gamma(b_{d-1})_1$, $\gamma(b_{d-1})_2$ are defined by their entries (function values) as follows. For any car $c \geq b_{d-1}$, we have 
\[   
\gamma(b_{d-1})_1(c) = 
     \begin{cases}
        \Phi_{m,n}^{b_{d-1}^<,b_{d-1}^>}(f \mid' d-1)(b_{d-1}-1) &\quad\text{if } f(c) = f(b_{d-1}) \\
       f_d(c) &\quad\text{otherwise} \\
     \end{cases}\
\] 
and
\[   
\gamma(b_{d-1})_2(c) = 
     \begin{cases}
       \mathrm{aim}(b_{d-1}-1,f) &\quad\text{if } f(c) = f(b_{d-1}) \\
       f_d(c) &\quad\text{otherwise} \\
     \end{cases},\
\] 
where $\mathrm{aim}(b_{d-1}-1,f) = \Phi_{m,n}^{b_{d-1}^<,b_{d-1}^>}(f \mid' d-1)(b_{d-1}-1)+[\Xi_{m,n;k}(f)(b_{d-1}-1)-\Xi_{m,n;k}(f)(b_{d-1})]$ is the unique vertex that car $b_{d-1}$ can prefer in $\psi_{m,n;k}(f)$ so that $(b_{d-1}-1,b_{d-1})$ is a tie in $\Xi_{m,n;k}[\psi_{m,n;k}(f)]$, as will be shown in Lemma \ref{aim exists}.

In the degenerate case $d=1$, we define $\psi_{m,n;k}(f) = f$, as $f$ and $\Xi_{m,n;k}(f)$ have the same ties.

\begin{rem}
\label{lastneg}
$\psi_{m,n;k}$ fixes $f$ if $e_{d-1} = 0$, as $\Xi_{m,n;k}$ caused no tie changes in the last component. Roughly speaking, for $e_{d-1} \neq 0$, $\psi_{m,n;k}$ reflects (via $\Phi_{m,n}^{b_{d-1}^<,b_{d-1}^>}$) the Naples components (except the ones after $f(b_{d-1})$) of $f \mid' d-1$ traversed by car $b_{d-1}$, then it decides the preferences of the remaining cars, i.e. those of $f_d$, depending on the sign of $e_{d-1}$:
\begin{enumerate}
\item If $e_{d-1} = 1$, that means $\Xi_{m,n;k}$ added a tie in the last component, so the preferences of the $f_d$ cars are decided by $\gamma(b_{d-1})_1$, which ``aims'' certain cars in such a way that results in $\Xi_{m,n;k}$ subtracting a tie and which fixes the other car preferences.
\item If $e_{d-1} = -1$, that means $\Xi_{m,n;k}$ subtracted a tie in the last component, so the preferences of the $f_d$ cars are decided by $\gamma(b_{d-1})_2$, which ``aims'' certain cars in such a way that results in $\Xi_{m,n;k}$ adding a tie and which fixes the other car preferences.
\end{enumerate}
\end{rem}

\begin{lem}
\label{aim exists}

$\psi_{m,n;k}$ is an involution on $B(m,n;k)$ with the following properties. For $f \in B(m,n;k)$ with $k$-decomposition $f = f_1 \oplus f_2 \oplus \cdots \oplus f_d$ where $d \geq 2$ and $\Delta\mathrm{ties}(\Xi_{m,n;k},f) = (e_1,e_2,\ldots,e_{d-2},e_{d-1})$, we have $\Delta\mathrm{ties}(\Xi_{m,n;k},\psi_{m,n;k}(f)) = (e_1,e_2,\ldots,e_{d-2},-e_{d-1})$. 
Furthermore, $\psi_{m,n;k}(f)$ has the same Naples components as $f$, and $\psi_{m,n;k}(f)$ has $k$-decomposition $\psi_{m,n;k}(f) = f'_1 \oplus f'_2 \oplus \ldots \oplus f'_d$ where $\mathrm{len}(f'_i) = \mathrm{len}(f_i)$.
\end{lem}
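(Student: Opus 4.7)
The plan is to verify in sequence that $\mathrm{aim}(b_{d-1}-1, f)$ is well-defined as a vertex in $[n]$, that $\psi_{m,n;k}(f) \in B(m,n;k)$ and shares its Naples components and $k$-decomposition block lengths with $f$, that the tie-change tuple transforms by negating only its last entry, and finally that $\psi_{m,n;k}$ is an involution. The central workhorse will be that $\Phi_{m,n}^{a,b}$ reflects each Naples component inside $\mathrm{path}(a,b)$ while preserving both the length of every Naples traverse path and the forward/backward/on-preference status of each car.

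First we verify that $\mathrm{aim}(b_{d-1}-1, f)$ is a legitimate preference: by the definition of $\mathrm{tcomp}(b_{d-1})$ and the fact that $\Xi_{m,n;k}$ acts on the block $f_d$ via a uniform shift of $-k$, the candidate vertex lies inside $[b_{d-1}^<, b_{d-1}^>]$, and its uniqueness is immediate since one is seeking the unique preference for $b_{d-1}$ that forces $\Xi_{m,n;k}(\psi_{m,n;k}(f))(b_{d-1}) = \Xi_{m,n;k}(\psi_{m,n;k}(f))(b_{d-1}-1)$. Next, because $\Phi_{m,n}^{b_{d-1}^<, b_{d-1}^>}$ permutes preferences only inside $[b_{d-1}^<, b_{d-1}^>]$ and acts separately on each Naples component there, the Naples components of $\psi_{m,n;k}(f)$ and of $f$ coincide as paths, and the forward/backward status of every car is preserved; consequently $\psi_{m,n;k}(f) \in B(m,n;k)$ and its $k$-decomposition has blocks of the same lengths as those of $f$.

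For the tie-change tuple, the entries $e_i$ with $i < d-1$ concern pairs $(b_i-1, b_i)$ that lie inside $f \mid' d-1$. We would establish both that the raw tie status at $(b_i-1, b_i)$ agrees in $f$ and $\psi_{m,n;k}(f)$, since reflection within a Naples component preserves coincidence of preferences, and that the first $d-1$ stages of $\Xi_{m,n;k}$ yield the same tie pattern at $(b_i-1, b_i)$ for both inputs. This latter point uses that each stage of $\Xi_{m,n;k}$ is built from reflections which, on a per-Naples-component basis, commute with $\Phi_{m,n}^{b_{d-1}^<, b_{d-1}^>}$. The final coordinate flips because $\gamma(b_{d-1})_1$ and $\gamma(b_{d-1})_2$ were tailored to toggle exactly the tie status at $(b_{d-1}-1, b_{d-1})$ after applying $\Xi_{m,n;k}$, as encoded in the definition of $\mathrm{aim}$. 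The involution property then reduces to checking that $\Phi_{m,n}^{b_{d-1}^<, b_{d-1}^>}$ is self-inverse and that the two $\gamma$-branches are mutually inverse operations on the final block, using that $b_{d-1}$, $b_{d-1}^<$, $b_{d-1}^>$, and the $k$-decomposition of $\psi_{m,n;k}(f)$ coincide with those of $f$.

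The main obstacle is likely to be verifying the invariance of the earlier entries $e_1, \ldots, e_{d-2}$ under $\psi_{m,n;k}$: this requires tracking how the cascade of reflections used in constructing $\Xi_{m,n;k}$ interacts with the single reflection $\Phi_{m,n}^{b_{d-1}^<, b_{d-1}^>}$ at each intermediate stage, to ensure no tie at a prior block boundary is inadvertently created or destroyed. The plan is a bookkeeping check that each stage's reflection decomposes into reflections on individual Naples components, and that such per-component reflections commute with $\Phi_{m,n}^{b_{d-1}^<, b_{d-1}^>}$ up to a uniform shift of indices.
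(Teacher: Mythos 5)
Your outline follows the same architecture as the paper's proof---reflect $f \mid' d-1$ by $\Phi_{m,n}^{b_{d-1}^<,b_{d-1}^>}$, observe that this preserves all ties among the first $d-1$ blocks, and handle the last entry via the two $\gamma$-branches---but it contains a genuine circularity at the central step. You justify the sign flip of $e_{d-1}$ by asserting that $\gamma(b_{d-1})_1$ and $\gamma(b_{d-1})_2$ ``were tailored to toggle exactly the tie status \ldots{} as encoded in the definition of $\mathrm{aim}$.'' But the paper explicitly defers the claim that $\mathrm{aim}(b_{d-1}-1,f)$ has this property to this very lemma (``as will be shown in Lemma \ref{aim exists}''); it is the lemma's principal content, not an available hypothesis. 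The paper proves it by explicit coordinates: writing $L=(i,i+1,\ldots,i+j)$ for the Naples component of $f \mid' d-1$ containing $f(b_{d-1}-1)=i+a$ and $L'=(i',\ldots,i'+j)$ for its image under the reflection, in the case $e_{d-1}=-1$ it computes $\Xi_{m,n;k}(f)(b_{d-1})=i+a-k$, the gap $\Xi_{m,n;k}(f)(b_{d-1}-1)-\Xi_{m,n;k}(f)(b_{d-1})=j-2a+k>0$, hence $\mathrm{aim}(b_{d-1}-1,f)=i'+j-a+k$, and then verifies that each retargeted car travels exactly $k-a$ steps leftward to enter $L'$ (matching the $k-a$ occupied vertices it crossed rightward to enter $L$ in $\Xi_{m,n;k}(f)$), so that it still parks forward, the Naples components are unchanged, and $\Xi_{m,n;k}[\psi_{m,n;k}(f)](b_{d-1})=i'+j-a=\Xi_{m,n;k}[\psi_{m,n;k}(f)](b_{d-1}-1)$ is a tie; the case $e_{d-1}=1$ is the symmetric computation ending with the positive gap $j-2a+k=f(b_{d-1})-f(b_{d-1}-1)>0$. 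None of this verification appears in your proposal, and without it both the sign flip and the well-definedness of $\mathrm{aim}$ as a vertex remain unproved.

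Relatedly, your claim that ``the forward/backward status of every car is preserved'' because $\Phi_{m,n}^{b_{d-1}^<,b_{d-1}^>}$ only permutes preferences inside $[b_{d-1}^<,b_{d-1}^>]$ covers only the cars of $f \mid' d-1$: the cars of $f_d$ receive \emph{new} preferences from the $\gamma$-branches, and it is precisely for these cars that forward parking (hence $\mathrm{len}(f'_d)=\mathrm{len}(f_d)$), membership in $B(m,n;k)$, and preservation of Naples components must be established---again via the $k-a$ step computation above, so this cannot be outsourced to properties of the reflection alone. Your treatment of the earlier entries $e_1,\ldots,e_{d-2}$ (the paper dispatches this with the observation that $f \mid' d-1$ and its reflection have exactly the same ties, and that the parking components of $\Xi_{m,n;k}(f) \mid' d-1$ coincide with the Naples components of $f \mid' d-1$) and of the involution property (same endpoints $b_{d-1}^<$, $b_{d-1}^>$, components enclosed in reverse order, so the two $\gamma$-branches undo each other) matches the paper in spirit and is fine.
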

\begin{proof}
Clearly, $f \mid' d-1$ and $\Phi_{m,n}^{b_{d-1}^<,b_{d-1}^>}(f \mid' d-1)$ have exactly the same ties. It remains to compare the last entries of $\Delta\mathrm{ties}(\Xi_{m,n;k},f)$ and $\Delta\mathrm{ties}(\Xi_{m,n;k},\psi_{m,n;k}(f))$, to which end it suffices to check the cases $e_{d-1} = \pm1$, for which $d-1$ must necessarily be odd and hence the cars of $f_d$ must park forward. It is easy to see that the parking components of $\Xi_{m,n;k}(f) \mid' d-1$ coincide with the Naples components of $f \mid' d-1$.

Consider the Naples component $L = (i,i+1,i+2,\ldots,i+j)$ in $f \mid' d-1$ containing $f(b_{d-1}-1) = i+a$ for some $0 \leq a \leq j$, with car $b_{d-1}-1$ eventually parking (backward) at vertex $i$. $L$ is also a parking component in $\Xi_{m,n;k}(f) \mid' d-1$ containing $\Xi_{m,n;k}(f)(b_{d-1}-1) = i+j-a$, with car $b_{d-1}-1$ eventually parking at vertex $i+j$. Since $e_{d-1} = \pm1$, there is a tie change occurring between cars $b_{d-1}$ and $b_{d-1}-1$, so $L$ must be contained in $\mathrm{tcomp}(b_{d-1})$ and hence reflected by $\Phi_{m,n}^{b_{d-1}^<,b_{d-1}^>}$.

Let $L' = (i',i'+1,i'+2,\ldots,i'+j)$ be the unique Naples component corresponding to $L$ in $\Phi_{m,n}^{b_{d-1}^<,b_{d-1}^>}(f \mid' d-1) = \psi_{m,n;k}(f)\mid'd-1$, where $L'$ is preferred by the same set of cars, with $\psi_{m,n;k}(f)(b_{d-1}-1) = i'+a$. $L'$ is also a parking component in $\Xi_{m,n;k}[\psi_{m,n;k}(f)] \mid' d-1$, with car $b_{d-1}-1$ preferring $i'+j-a$. 

Suppose $e_{d-1} = -1$, in which case $f(b_{d-1}) = i+a$. Since $(b_{d-1}-1,b_{d-1})$ is not a tie in $\Xi_{m,n;k}(f)$ and since the traverse path length of car $b_{d-1}-1$ is at most $k$, we have $\Xi_{m,n;k}(f)(b_{d-1}) = i+a-k \leq i$ and $\Xi_{m,n;k}(f)(b_{d-1}) \leq i+j-a$. We have $\Xi_{m,n;k}(f)(b_{d-1}-1)-\Xi_{m,n;k}(f)(b_{d-1}) = j-2a+k > 0$. Now, for any car $c \geq b_{d-1}$ preferring vertex $f(b_{d-1})$ in $f$, $c$ will prefer $\mathrm{aim}(b_{d-1}-1,f) = \Phi_{m,n}^{b_{d-1}^<,b_{d-1}^>}(f \mid' d-1)(b_{d-1}-1)+[\Xi_{m,n;k}(f)(b_{d-1}-1)-\Xi_{m,n;k}(f)(b_{d-1})] = i'+a+(j-2a+k) = i'+j-a+k$ in $\psi_{m,n;k}(f)$. Notice that $i-(i+a-k) = k-a = i'+j-a+k-(i'+j)$, meaning that the preference $\psi_{m,n;k}(f)(c)$ makes the car $c$ travel the same number $k-a$ of steps left to enter $L'$ in $\psi_{m,n;k}(f)$ as it does right to enter $L$ in $\Xi_{m,n;k}(f)$, and hence $c$ does park forward in $\psi_{m,n;k}(f)$ because the same number $k-a$ of occupied vertices precede $i$ and follow $i'+j$. It also follows that $\mathrm{tpath}^N(c)$ intersects the same Naples components in $f$ and in $\psi_{m,n;k}(f)$, up to permutation, so $\psi_{m,n;k}(f)$ has the same Naples components as $f$. Since $\Xi_{m,n;k}[\psi_{m,n;k}(f)](b_{d-1}-1) = i'+j-a$ and $\Xi_{m,n;k}[\psi_{m,n;k}(f)](b_{d-1}) = i'+j-a+k-k = i'+j-a$, $(b_{d-1}-1,b_{d-1})$ is indeed a tie in $\Xi_{m,n;k}[\psi_{m,n;k}(f)]$ as desired. 

Suppose $e_{d-1} = 1$. For $\Xi_{m,n;k}(f)(b_{d-1}) = \Xi_{m,n;k}(f)(b_{d-1}-1) = i+j-a$ to happen, car $b_{d-1}$ must prefer $i+j-a+k \geq i+j$ in $f$. We have $\psi_{m,n;k}(f)(b_{d-1}-1) = i'+a = \psi_{m,n;k}(f)(c)$, $\Xi_{m,n;k}[\psi_{m,n;k}(f)](b_{d-1}-1) = i'+j-a$, and $\Xi_{m,n;k}[\psi_{m,n;k}(f)](c) = i'+a-k \leq i'$, for any car $c \geq b_{d-1}$ preferring vertex $f(b_{d-1})$ in $f$. It follows that $i'-(i'+a-k) = k-a = i+j-a+k-(i+j)$, so the preference $\psi_{m,n;k}(f)(c)$ makes the car $c$ travel the same number $k-a$ of steps left to enter $L$ in $f$ as it does right to enter $L'$ in $\Xi_{m,n;k}[\psi_{m,n;k}(f)]$, and hence $\psi_{m,n;k}(f)$ has the same Naples components as $f$. Finally, $\Xi_{m,n;k}[\psi_{m,n;k}(f)](b_{d-1}-1)-\Xi_{m,n;k}[\psi_{m,n;k}(f)](b_{d-1}) = i'+j-a-(i'+a-k) = j-2a+k = f(b_{d-1})-f(b_{d-1}-1) > 0$ since $(b_{d-1}-1,b_{d-1})$ is not a tie in $f$. Thus, $(b_{d-1}-1,b_{d-1})$ is indeed not a tie in $\Xi_{m,n;k}[\psi_{m,n;k}(f)]$ as desired. 

In both cases $e_{d-1} = \pm1$, the endpoints $b_{d-1}^<$ and $b_{d-1}^>$ are the same and the Naples components enclosed are in reverse orders, so the choices for $\psi_{m,n;k}(f)(b_{d-1})$ are uniquely determined. Hence $\psi_{m,n;k}$ is indeed an involution.
\end{proof}

Furthermore, define $\mathrm{out}(\psi_{m,n;k},f)$ to be the $\mathrm{len}(f_d)$-tuple 
\[   
\mathrm{out}(\psi_{m,n;k},f) = 
     \begin{cases}
       \gamma(b_{d-1})_1 &\quad\text{if } e_{d-1} = 1\\
       \gamma(b_{d-1})_2 &\quad\text{if } e_{d-1} = -1\\
       f_d &\quad\text{if } e_{d-1} = 0\\
     \end{cases},
\]
which we will call the \textbf{output tail of $\psi_{m,n;k}$}; this is just the last sub-tuple of $\psi_{m,n;k}(f)$.

\begin{exm}
Let $m=10$, $n=11$, and $k=3$.

For $f = (5,5,5,5,4,4,8,9,8,8) \in B(10,11;3)$, we have $\Xi_{10,11;3}(f) = (1,1,1,1,2,1,7,9,7,5)$, $\psi_{10,11;3}(f) = (7,7,7,7,6,6,2,9,2,4)$, and $\Xi_{10,11;3}[\psi_{10,11;3}(f)] = (3,3,3,3,4,3,1,9,1,1)$. Notice that $\Delta\mathrm{ties}(\Xi_{10,11;3},f) = (-1,0,-1)$, $\Delta\mathrm{ties}(\Xi_{10,11;3},\psi_{10,11;3}(f)) = (-1,0,1)$, and $\mathrm{out}(\psi_{10,11;3},f)$ $= (4)$.
\end{exm}

Now we introduce the involution $\Psi_{m,n;k}$ on $B(m,n;k)$; see Remark \ref{tieswitchrem} for a rough picture of how $\Psi_{m,n;k}$ works. First, we define the following to simplify notation:
\begin{enumerate}
\item $\psi^0(f) = f$
\item $\psi^1(f) = \psi_{m,n;k}(f)\mid'd-1$
\item $\psi^2(f) = \psi_{m-\mathrm{len}(f_d),n;k}(\psi^1(f))\mid'd-2$ 
\item $\psi^3(f) = \psi_{m-\mathrm{len}(f_d)-\mathrm{len}(f_{d-1}),n;k}(\psi^2(f))\mid'd-3$
\item In general, given $\psi^l(f)$, define \[\psi^{l+1}(f) = \psi_{m-\sum_{i=0}^{l-1}{\mathrm{len}(f_{d-i})},n;k}(\psi^l(f))\mid'd-l-1.\]
\end{enumerate}

Finally, we define $\Psi_{m,n;k}$. Given $f \in B(m,n;k)$ with $k$-decomposition $f = f_1 \oplus f_2 \oplus \cdots \oplus f_d$, we construct $\Psi_{m,n;k}(f)$ in stages, by applying in succession $\psi_{i,n;k}$ for various $i \in [m]$:
\begin{enumerate}
\item $\Psi_{m,n;k}(f)_1 = \mathrm{out}(\psi_{m,n;k},f)$. 
\item $\Psi_{m,n;k}(f)_2 = \mathrm{out}(\psi_{m-\mathrm{len}(f_d),n;k},\psi^1(f)) \oplus \Psi_{m,n;k}(f)_1$
\item $\Psi_{m,n;k}(f)_3 = \mathrm{out}(\psi_{m-\mathrm{len}(f_d)-\mathrm{len}(f_{d-1}),n;k},\psi^2(f)) \oplus \Psi_{m,n;k}(f)_2$
\item In general, given $\Psi_{m,n;k}(f)_j$, we have $$\Psi_{m,n;k}(f)_{j+1} = \mathrm{out}(\psi_{m-\sum_{i=0}^{j-1}{\mathrm{len}(f_{d-i})},n;k},\psi^j(f)) \oplus \Psi_{m,n;k}(f)_j.$$
\item Finally, we have $\Psi_{m,n;k}(f) = \Psi_{m,n;k}(f)_{d}$.
\end{enumerate}

\begin{rem}
\label{tieswitchrem}
Roughly speaking, given $f \in B(m,n;k)$, $\Psi_{m,n;k}(f)$ is defined recursively through repeated applications of $\psi_{i,n;k}$ for various $i \in [m]$, in the following manner (where we have omitted the cumbersome indices):
\begin{enumerate}
\item\label{tieswitch1} Apply $\psi$ to $f$, then remove the output tail.
\item\label{tieswitch2} Apply $\psi$ to the result of Step \ref{tieswitch1}., then remove the output tail.
\item Apply $\psi$ to the result of Step \ref{tieswitch2}., then remove the output tail.
\item Continue this process until the empty tuple is reached, then concatenate all these removed output tails in right-to-left order to get $\Psi_{m,n;k}(f)$.
\end{enumerate}
\end{rem}

\begin{thm}
\label{ties invo}
$\Psi_{m,n;k}$ is an involution on $B(m,n;k)$ with the following property. For all $f \in B(m,n;k)$ we have $\Delta\mathrm{ties}(\Xi_{m,n;k},\Psi_{m,n;k}(f)) = -\Delta\mathrm{ties}(\Xi_{m,n;k},f)$.
\end{thm}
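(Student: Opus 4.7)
The plan is to argue by induction on the length $d$ of the $k$-decomposition of $f$, leveraging Lemma \ref{aim exists} as the workhorse at each stage. The base case $d = 1$ is immediate: $\psi_{m,n;k}(f) = f$ by definition of the degenerate case, so $\Psi_{m,n;k}(f) = f$, and the tie-change tuple is empty. For the inductive step, I would first unroll the iterative construction as the recursion
\begin{equation*}
\Psi_{m,n;k}(f) \;=\; \Psi_{m-\mathrm{len}(f_d),n;k}\bigl(\psi_{m,n;k}(f)\mid' d-1\bigr)\;\oplus\;\mathrm{out}(\psi_{m,n;k},f),
\end{equation*}
recognizing that stages $2, \ldots, d$ of the iterative construction for $\Psi_{m,n;k}(f)$ coincide exactly with the iterative construction of $\Psi_{m-\mathrm{len}(f_d),n;k}$ applied to $\psi^1(f) = \psi_{m,n;k}(f)\mid' d-1$.

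By Lemma \ref{aim exists}, $\psi_{m,n;k}(f)$ preserves the sub-tuple lengths in the $k$-decomposition, the Naples components of $f$, and flips only the last entry of the tie-change tuple. Hence $\psi_{m,n;k}(f)\mid' d-1$ lies in $B(m-\mathrm{len}(f_d),n;k)$ with $k$-decomposition of length $d-1$ and tie-change tuple $(e_1,\ldots,e_{d-2})$. The inductive hypothesis, applied to this smaller object, yields a member of $B(m-\mathrm{len}(f_d),n;k)$ with tie-change tuple $(-e_1,\ldots,-e_{d-2})$. Appending $\mathrm{out}(\psi_{m,n;k},f)$ produces the candidate $\Psi_{m,n;k}(f)\in B(m,n;k)$; its first $d-2$ tie-change entries are $(-e_1,\ldots,-e_{d-2})$ by construction, and the final entry equals $-e_{d-1}$ by Lemma \ref{aim exists}'s characterization of $\gamma(b_{d-1})_1$ and $\gamma(b_{d-1})_2$, provided the boundary car's preference and the Naples component structure around $b_{d-1}$ are correctly reproduced after the recursive step. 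To ensure this last compatibility cleanly, I would strengthen the inductive statement to record that $\Psi_{m-\mathrm{len}(f_d),n;k}$ preserves sub-tuple lengths, Naples components, and the final-car preference of the $(d-1)$-th sub-tuple.

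For the involution property, write $g := \Psi_{m,n;k}(f)$ and apply the same recursion to $g$. Using the strengthened structural invariants just established, $\mathrm{out}(\psi_{m,n;k},g)$ equals $f_d$ and $\psi_{m,n;k}(g)\mid' d-1$ equals $\Psi_{m-\mathrm{len}(f_d),n;k}(f\mid' d-1)$ (since $\psi_{m,n;k}$ is itself an involution that flips the last tie-change entry, per Lemma \ref{aim exists}); the inductive involution recovers $f\mid' d-1$, and concatenating with $f_d$ yields $f$. The main obstacle is the boundary-compatibility verification: making precise how the internal reflections performed by $\Psi_{m-\mathrm{len}(f_d),n;k}$ interact with the final reflection of $\psi_{m,n;k}$, and verifying that $\mathrm{out}(\psi_{m,n;k},g)$ equals $f_d$ on the nose rather than merely realizing the same tie-change value. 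This requires tracking through the recursion exactly the invariants that Lemma \ref{aim exists} is set up to maintain, which is why strengthening the inductive claim to include sub-tuple lengths, Naples components, and last-car preferences is essential.
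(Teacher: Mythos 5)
Your proposal is correct and is essentially the paper's own argument: the paper likewise proves the theorem by applying $\psi_{i,n;k}$ stage by stage, citing Lemma \ref{aim exists} at each step to negate the final tie-change entry while preserving occupied vertices, Naples components, and the $k$-decomposition, and deduces $\Psi_{m,n;k}^2(f)=f$ by running $\Psi_{m,n;k}(f)$ back through the same procedure to recover $f_{d-j+1}\oplus\cdots\oplus f_d$ at stage $j$. Your induction on $d$ with a strengthened hypothesis is just a formalized packaging of that iteration, and the ``boundary compatibility'' you flag is exactly the set of invariants Lemma \ref{aim exists} supplies.
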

\begin{proof}
Let $f \in B(m,n;k)$ with $k$-decomposition $f = f_1 \oplus f_2 \oplus \cdots \oplus f_d$. For each $j \in [d]$, applying $\psi_{m-\sum_{i=0}^{j-1}{\mathrm{len}(f_{d-i})},n;k}$ to $\psi^j(f)$ negates the last entry of $\Delta\mathrm{ties}(\Xi_{m-\sum_{i=0}^{j-1}{\mathrm{len}(f_{d-i})},n;k},\psi^j(f))$ while fixing other entries, and yields the same occupied vertices, Naples components, and $k$-decomposition as $\psi^j(f)$, by Lemma \ref{aim exists}. 
Hence $\Psi_{m,n;k}$ negates all the entries of $\Delta\mathrm{ties}(\Xi_{m,n;k},f)$ as desired, upon outputting $\Psi_{m,n;k}(f) = \Psi_{m,n;k}(f)_{d}$ via the above procedure.

To show that $\Psi_{m,n;k}^2(f) = f$, it suffices to run $\Psi_{m,n;k}(f)$ through the aforementioned procedure, applying the various involutions $\psi_{i,n;k}$ in the order prescribed. By Lemma \ref{aim exists}, we will obtain $\Psi_{m,n;k}(\Psi_{m,n;k}(f))_j = f_{d-j+1} \oplus \ldots \oplus f_{d-1} \oplus f_{d}$. Thus, we have $\Psi_{m,n;k}(\Psi_{m,n;k}(f))_d = f$ as desired.
\end{proof}
\begin{exm}
Let $m=8$, $n=10$, and $k=3$.

For $f = (6,6,6,6,5,5,4,4) \in B(8,10;3)$, we have $\Xi_{8,10;3}(f) = (1,1,1,1,2,1,4,1)$. Since $\Delta\mathrm{ties}(\Xi_{8,10;3},f) = (-1,0,-1)$, $\Xi_{8,10;3}(f)$ has two less ties than $f$. Then $\Psi_{8,10;3}(f)_1 = (7)$, $\Psi_{8,10;3}(f)_2 = (4,7)$, $\Psi_{8,10;3}(f)_3 = (6,4,7)$, and 
$\Psi_{8,10;3}(f) = \Psi_{8,10;3}(f)_4 = (6,6,6,6,5,6,4,7) \in B(8,10;3)$, and we have $\Xi_{8,10;3}(\Psi_{8,10;3}(f)) = (1,1,1,1,2,2,4,4)$. Since $\Delta\mathrm{ties}(\Xi_{8,10;3},\Psi_{8,10;3}(f))$ $= (1,0,1)$, $\Xi_{8,10;3}(\Psi_{8,10;3}(f))$ has two more ties than $\Psi_{8,10;3}(f)$.

For $f = (6,6,6,6,5,6,8,8) \in B(8,10;3)$, we have $\Xi_{8,10;3}(f) = (2,2,2,2,3,3,8,5)$. Since $\Delta\mathrm{ties}(\Xi_{8,10;3},f) = (1,0,-1)$, $\Xi_{8,10;3}(f)$ has the same number of ties as $f$. Then $\Psi_{8,10;3}(f)_1 = (5)$, $\Psi_{8,10;3}(f)_2 = (2,5)$, $\Psi_{8,10;3}(f)_3 = (6,2,5)$, and
$\Psi_{8,10;3}(f) = \Psi_{8,10;3}(f)_4 = (7,7,7,7,6,6,2,5)$ $\in B(8,10;3)$, we have $\Xi_{8,10;3}(\Psi_{8,10;3}(f)) = (3,3,3,3,4,3,2,2)$. Since $\Delta\mathrm{ties}(\Xi_{8,10;3},\Psi_{8,10;3}(f)) = (-1,0,1)$, $\Xi_{8,10;3}(\Psi_{8,10;3}(f))$ has the same number of ties as $\Psi_{8,10;3}(f)$.
\end{exm}
\begin{cor}
\label{ties equal}
$B(m,n;k)$ has the same number of ties as $PF(m,n)$ for any $0 \leq k \leq n-1$.
\end{cor}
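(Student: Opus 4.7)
The plan is to deduce this corollary from Theorem \ref{ties invo} via a standard sign-reversing involution argument, using the fact that $\Xi_{m,n;k}$ is already a bijection. Because $\Xi_{m,n;k} : B(m,n;k) \to PF(m,n)$ is a bijection (Theorem \ref{cncorr}), the total number of ties in $PF(m,n)$ is $\sum_{f \in B(m,n;k)} \mathrm{ties}(\Xi_{m,n;k}(f))$, so it suffices to show
\[
\sum_{f \in B(m,n;k)} \bigl[\mathrm{ties}(\Xi_{m,n;k}(f)) - \mathrm{ties}(f)\bigr] = 0.
\]

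The first step is to observe that, by the very definition of the tie-change tuple, for each $f \in B(m,n;k)$ with $k$-decomposition of length $d$ we have
\[
\mathrm{ties}(\Xi_{m,n;k}(f)) - \mathrm{ties}(f) = S(f) := \sum_{i=1}^{d-1} e_i(f),
\]
where $(e_1(f),\ldots,e_{d-1}(f)) = \Delta\mathrm{ties}(\Xi_{m,n;k},f)$; the reason is that changes in ties caused by $\Xi_{m,n;k}$ can only occur at the sub-tuple boundaries $b_1,\ldots,b_{d-1}$, and all other tie-statuses are preserved.

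The second step is to apply Theorem \ref{ties invo}: since $\Psi_{m,n;k}$ is an involution on $B(m,n;k)$ with $\Delta\mathrm{ties}(\Xi_{m,n;k},\Psi_{m,n;k}(f)) = -\Delta\mathrm{ties}(\Xi_{m,n;k},f)$, summing entrywise gives $S(\Psi_{m,n;k}(f)) = -S(f)$. Decomposing $B(m,n;k)$ into orbits of $\Psi_{m,n;k}$, every $2$-element orbit $\{f,\Psi_{m,n;k}(f)\}$ contributes $S(f) + S(\Psi_{m,n;k}(f)) = 0$ to the sum, while every fixed point satisfies $S(f) = -S(f)$ and hence $S(f) = 0$. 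In either case the contribution vanishes, so $\sum_{f \in B(m,n;k)} S(f) = 0$, which yields the desired equality of tie totals.

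The main conceptual obstacle has already been overcome in Theorem \ref{ties invo}; what remains here is the routine sign-reversing involution bookkeeping sketched above, and the only point meriting care is the observation that $\mathrm{ties}(\Xi_{m,n;k}(f)) - \mathrm{ties}(f)$ equals the sum of the entries of $\Delta\mathrm{ties}(\Xi_{m,n;k},f)$, which is immediate from the definition of the tie-change tuple together with the fact that $\Xi_{m,n;k}$ does not alter any non-boundary adjacent pair.
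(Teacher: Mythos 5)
Your proof is correct and follows essentially the same route the paper intends: Corollary \ref{ties equal} is stated as an immediate consequence of Theorem \ref{ties invo}, precisely via the sign-reversing involution bookkeeping you spell out (orbits of $\Psi_{m,n;k}$ cancel, fixed points satisfy $S(f)=-S(f)=0$), together with the paper's earlier observation that tie changes under $\Xi_{m,n;k}$ occur only at the boundary pairs $(b_i-1,b_i)$, so that $\mathrm{ties}(\Xi_{m,n;k}(f))-\mathrm{ties}(f)$ is the sum of the entries of $\Delta\mathrm{ties}(\Xi_{m,n;k},f)$. Your write-up is, if anything, more explicit than the paper, which leaves this deduction implicit.
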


\section{Injection into the parking functions with obstructions}
\label{obstr bound}
Let $PF(m,n;k)$ denote the set of $k$-Naples parking functions with $m$ cars on $n$ vertices. We now extend $\Xi_{m,n;k}$ to an injection of $PF(m,n;k)$ into $LPF(m,n+k;k)$, the set of parking functions with $m$ cars on $n+k$ vertices where the first $k$ parking spots are already occupied by obstructions. Again, let $I$ denote the directed path with $n+k$ vertices. Let $OPF(m,n+k;k) \supset LPF(m,n+k;k)$ denote the set of parking functions with $m$ cars on $n+k$ vertices where certain $k$ consecutive vertices are obstructed. We will first need to generalize the involution $\Phi_{i,n}$ for $i \leq m$ to one on $OPF(m,n+k;k)$.

Let $f \in OPF(m,n+k;k)$. We can define traverse paths of cars just as in $PF(m,n)$, treating the obstructions as parked cars. Then we can define parking components in the same manner. Let $\bar{k}$ denote the path consisting of the $k$ obstructed vertices. An \textbf{obstruction component} of $f$ is a path on $I$ that is either of the following:
\begin{enumerate}
\item a parking component of $f$ that does not intersect $\bar{k}$
\item the union of $\bar{k}$ with all parking components of $f$ that intersect $\bar{k}$.
\end{enumerate}

We now generalize $\Phi_{m,n}$ to an involution $\bar{\Phi}_{m,n+k;k}$ on $OPF(m,n+k;k)$ as follows. Given $f \in OPF(m,n+k;k)$, define $\bar{\Phi}_{m,n+k;k}(f)$ as follows. Let $L = (i,i+1,i+2,\ldots,i+j)$ be an obstruction component of $f$, with its reflection $(n-i+1,n-i,n-i-1,\ldots,n-i-j+1)$ which we rewrite as the path $(n-i-j+1,n-i-j+2,\ldots,n-i+1)$. For any car $c$ preferring $i+a$ in $f$ for $0 \leq a \leq j$, $c$ will prefer $n-i-j+a+1$ in $\bar{\Phi}_{m,n+k;k}(f)$. In addition, if $i+a$, $i+a+1$, $\ldots$, $i+a+k-1$ are the obstructed vertices in $f$ for $0 \leq a \leq j$, then the obstructed vertices in $\bar{\Phi}_{m,n+k;k}(f)$ will be $n-i-j+a+1$, $n-i-j+a+2$, $\ldots$, $n-i-j+a+k$. See Remark \ref{geninv} for a rough picture of $\bar{\Phi}_{m,n+k;k}$. Thus defined, it is easy to see that $\bar{\Phi}_{m,n+k;k}$ is an involution preserving the traverse path length of each car.

\begin{rem}
\label{geninv}
For any obstruction component $L$, $\bar{\Phi}_{m,n+k;k}$ maps $L$ to its reflection $L' = n+k+1-L$, since $I$ has length $n+k$. Furthermore, if $L' = L+b$, then any preference $b_i \in L$ is shifted to $b_i+b$ by $\bar{\Phi}_{m,n+k;k}$.
\end{rem}

In the following examples, we will indicate an obstructed vertex by marking its label with a prime (').

\begin{exm}
Consider the following directed path $I$. \\ 
\begin{tikzpicture}
\node[shape=circle,draw=black] (1) at (0,0) {1};
\node[shape=circle,draw=black] (2) at (1,0) {2};
\node[shape=circle,draw=black] (3) at (2,0) {3};
\node[shape=circle,draw=black] (4) at (3,0) {4};
\node[shape=circle,draw=black] (5) at (4,0) {5};
\node[shape=circle,draw=black] (6) at (5,0) {6};
\node[shape=circle,draw=black] (7) at (6,0) {7'};
\node[shape=circle,draw=black] (8) at (7,0) {8'};
\node[shape=circle,draw=black] (9) at (8,0) {9'};
\node[shape=circle,draw=black] (10) at (9,0) {10'};
\node[shape=circle,draw=black] (11) at (10,0) {11};
\node[shape=circle,draw=black] (12) at (11,0) {12};
\node[shape=circle,draw=black] (13) at (12,0) {13};
\node[shape=circle,draw=black] (14) at (13,0) {14};

\path [->] (1) edge (2);
\path [->] (2) edge (3);
\path [->] (3) edge (4);
\path [->] (4) edge (5);
\path [->] (5) edge (6);
\path [->] (6) edge (7);
\path [->] (7) edge (8);
\path [->] (8) edge (9);
\path [->] (9) edge (10);
\path [->] (10) edge (11);
\path [->] (11) edge (12);
\path [->] (12) edge (13);
\path [->] (13) edge (14);
\end{tikzpicture}

For $f = (1,1,11,10,10,14,6,2,4,4) \in OPF(10,14;4)$, the obstruction components are $(1,2,3), (4,5), (6)$, $(7,8,9,10,11,12,13), (14)$, and we have $\bar{\Phi}_{10,14;4}(f) = (12,12,6,5,5,1,9,13,$ $10,10)$ on \\ 
\begin{tikzpicture}
\node[shape=circle,draw=black] (1) at (0,0) {1};
\node[shape=circle,draw=black] (2) at (1,0) {2'};
\node[shape=circle,draw=black] (3) at (2,0) {3'};
\node[shape=circle,draw=black] (4) at (3,0) {4'};
\node[shape=circle,draw=black] (5) at (4,0) {5'};
\node[shape=circle,draw=black] (6) at (5,0) {6};
\node[shape=circle,draw=black] (7) at (6,0) {7};
\node[shape=circle,draw=black] (8) at (7,0) {8};
\node[shape=circle,draw=black] (9) at (8,0) {9};
\node[shape=circle,draw=black] (10) at (9,0) {10};
\node[shape=circle,draw=black] (11) at (10,0) {11};
\node[shape=circle,draw=black] (12) at (11,0) {12};
\node[shape=circle,draw=black] (13) at (12,0) {13};
\node[shape=circle,draw=black] (14) at (13,0) {14};

\path [->] (1) edge (2);
\path [->] (2) edge (3);
\path [->] (3) edge (4);
\path [->] (4) edge (5);
\path [->] (5) edge (6);
\path [->] (6) edge (7);
\path [->] (7) edge (8);
\path [->] (8) edge (9);
\path [->] (9) edge (10);
\path [->] (10) edge (11);
\path [->] (11) edge (12);
\path [->] (12) edge (13);
\path [->] (13) edge (14);
\end{tikzpicture}
\end{exm}

Also, there is a simple injection $\iota_{k}: PF(m,n) \rightarrow LPF(m,n+k;k)$ given as follows. Let $f \in PF(m,n)$. For each car $c_i$ with preference $p_i$ in $f$, $c_i$ will have preference $p_i+k$ in $\iota_{k}(f)$. Roughly speaking, $\iota_k$ just adds $k$ obstructed vertices to the left.

Now we use $\bar{\Phi}_{i,n+k;k}$ for $i \in [m]$ and $\iota_{k}$ to construct an injection $\bar{\Xi}_{m,n;k}: PF(m,n;k) \rightarrow LPF(m,n+k;k)$ which extends $\Xi_{m,n;k}(f)$. For any $f \in PF(m,n;k)-B(m,n;k)$, the cars of $f$ that must ``exit'' the parking lot in order to complete their backward search will correspond to the cars of $\bar{\Xi}_{m,n;k}(f) \in LPF(m,n+k;k)$ that prefer obstructed vertices. Think of $\bar{\Xi}_{m,n;k}$ as extending the parking lot so that all cars can finish their backward search without exiting.

Let $f \in PF(m,n;k)$ with its $k$-decomposition $f = f_1 \oplus f_2 \oplus \cdots \oplus f_d$. If $f \in B(m,n;k)$, then define $$\bar{\Xi}_{m,n;k}(f) = \iota_k[\Xi_{m,n;k}(f)].$$ We now define $\bar{\Xi}_{m,n;k}(f)$ for the case $f \in PF(m,n;k)-B(m,n;k)$; see Remark \ref{obstrem} for a rough picture of the procedure.

As in Section \ref{conta bij}, we construct $\bar{\Xi}_{m,n;k}(f)$ in stages by specifying the new preferences of the cars for each of $f_1, f_2, \ldots$ in that order; denote by $\bar{\Xi}_{m,n;k}(f)_i$ the parking function obtained after $f_i$. 
\begin{enumerate}
\item For each car $c_1$ with preference $p_1$ in $f_1$, $c_1$ will have new preference $n+1-p_1$. This yields $\bar{\Xi}_{m,n;k}(f)_1 \in PF(\mathrm{len}(f_1),n)$.
\item We define $\bar{\Xi}_{m,n;k}(f)_2 = \iota_k[\Phi_{\mathrm{len}(f_1),n}(\bar{\Xi}_{m,n;k}(f)_1)] \oplus f_2$. 
\item For each car $c_3$ with preference $p_3$ in $f_3$, $c_3$ will have new preference $n+1-p_3$. This yields a $\mathrm{len}(f_3)$-tuple $\nu_3$ of preferences. Then define $\bar{\Xi}_{m,n;k}(f)_3 = \bar{\Phi}_{\mathrm{len}(f_1)+\mathrm{len}(f_2),n+k;k}(\bar{\Xi}_{m,n;k}(f)_2)$ $\oplus \nu_3$. 
\item Continue this process as follows.
\begin{enumerate}
\item Suppose $i$ is odd. For each car $c_i$ with preference $p_i$ in $f_i$, $c_i$ will have new preference $n+1-p_i$. This yields a $\mathrm{len}(f_i)$-tuple $\nu_i$ of preferences, and we define \[\bar{\Xi}_{m,n;k}(f)_i = \bar{\Phi}_{\sum_{u=1}^{i-1}{\mathrm{len(f_u)}},n+k;k}(\bar{\Xi}_{m,n;k}(f)_{i-1}) \oplus \nu_i.\]
\item Suppose $i$ is even. We define \[\bar{\Xi}_{m,n;k}(f)_i = \bar{\Phi}_{\sum_{u=1}^{i-1}{\mathrm{len(f_u)}},n+k;k}(\bar{\Xi}_{m,n;k}(f)_{i-1}) \oplus f_i.\]
\end{enumerate}
\item Finally, define $\bar{\Xi}_{m,n;k}(f)$ as follows.
\begin{enumerate}
\item If $d$ is even, then $\bar{\Xi}_{m,n;k}(f) := \bar{\Xi}_{m,n;k}(f)_d$.
\item If $d$ is odd, then $\bar{\Xi}_{m,n;k}(f) := \bar{\Phi}_{m,n+k;k}(\bar{\Xi}_{m,n;k}(f)_d)$.
\end{enumerate}
\end{enumerate}
\begin{rem}
\label{obstrem}
Roughly speaking, $\bar{\Xi}_{m,n;k}(f)$ is constructed in stages $\bar{\Xi}_{m,n;k}(f)_1$, $\bar{\Xi}_{m,n;k}(f)_2$, $\ldots$, $\bar{\Xi}_{m,n;k}(f)_d$, $\bar{\Xi}_{m,n;k}(f)$, where the $\bar{\Xi}_{m,n;k}(f)_i$ are as follows:
\begin{enumerate}
\item The obstruction components of $\bar{\Xi}_{m,n;k}(f)_1$ are the reflection of the Naples components of $f_1$.
\item The obstruction components of $\bar{\Xi}_{m,n;k}(f)_2$ coincide with the Naples components of $f_1 \oplus f_2$, the difference being the lengthening by $k$ obstructions due to $\iota_k$.
\item In general, the obstruction components of $\bar{\Xi}_{m,n;k}(f)_i$
\begin{enumerate}
\item coincide with the Naples components of $f_1 \oplus f_2 \oplus \ldots \oplus f_i$ for $i$ even
\item coincide with the reflection of the Naples components of $f_1 \oplus f_2 \oplus \ldots \oplus f_i$ for $i \geq 3$ odd
\end{enumerate}
\item Since $f \notin B(m,n;k)$, the obstruction component containing $\bar{k}$ will end up on the right for odd $d$, in which case we need to apply one more reflection, so that $\bar{\Xi}_{m,n;k}(f) := \bar{\Phi}_{m,n+k;k}(\bar{\Xi}_{m,n;k}(f)_d)$.
\end{enumerate}
\end{rem}

\begin{exm}
Consider the following directed path $I$. \\
\begin{tikzpicture}
\node[shape=circle,draw=black] (1) at (0,0) {1};
\node[shape=circle,draw=black] (2) at (1,0) {2};
\node[shape=circle,draw=black] (3) at (2,0) {3};
\node[shape=circle,draw=black] (4) at (3,0) {4};
\node[shape=circle,draw=black] (5) at (4,0) {5};
\node[shape=circle,draw=black] (6) at (5,0) {6};
\node[shape=circle,draw=black] (7) at (6,0) {7};
\node[shape=circle,draw=black] (8) at (7,0) {8};
\node[shape=circle,draw=black] (9) at (8,0) {9};
\node[shape=circle,draw=black] (10) at (9,0) {10};

\path [->] (1) edge (2);
\path [->] (2) edge (3);
\path [->] (3) edge (4);
\path [->] (4) edge (5);
\path [->] (5) edge (6);
\path [->] (6) edge (7);
\path [->] (7) edge (8);
\path [->] (8) edge (9);
\path [->] (9) edge (10);
\end{tikzpicture}

For $f = (4,4,7,1,1,9,10,10,1) \in PF(9,10;4)$, we have the following: 
\begin{enumerate}
\item $\bar{\Xi}_{9,10;4}(f)_1 = (7,7,4,10)$

\item $\bar{\Xi}_{9,10;4}(f)_2 = (7,7,11,5,1)$ on \\
\begin{tikzpicture}
\node[shape=circle,draw=black] (1) at (0,0) {1'};
\node[shape=circle,draw=black] (2) at (1,0) {2'};
\node[shape=circle,draw=black] (3) at (2,0) {3'};
\node[shape=circle,draw=black] (4) at (3,0) {4'};
\node[shape=circle,draw=black] (5) at (4,0) {5};
\node[shape=circle,draw=black] (6) at (5,0) {6};
\node[shape=circle,draw=black] (7) at (6,0) {7};
\node[shape=circle,draw=black] (8) at (7,0) {8};
\node[shape=circle,draw=black] (9) at (8,0) {9};
\node[shape=circle,draw=black] (10) at (9,0) {10};
\node[shape=circle,draw=black] (11) at (10,0) {11};
\node[shape=circle,draw=black] (12) at (11,0) {12};
\node[shape=circle,draw=black] (13) at (12,0) {13};
\node[shape=circle,draw=black] (14) at (13,0) {14};

\path [->] (1) edge (2);
\path [->] (2) edge (3);
\path [->] (3) edge (4);
\path [->] (4) edge (5);
\path [->] (5) edge (6);
\path [->] (6) edge (7);
\path [->] (7) edge (8);
\path [->] (8) edge (9);
\path [->] (9) edge (10);
\path [->] (10) edge (11);
\path [->] (11) edge (12);
\path [->] (12) edge (13);
\path [->] (13) edge (14);
\end{tikzpicture}
\item $\bar{\Xi}_{9,10;4}(f)_3 = (7,7,4,13,9,2,1,1)$ on \\
\begin{tikzpicture}
\node[shape=circle,draw=black] (1) at (0,0) {1};
\node[shape=circle,draw=black] (2) at (1,0) {2};
\node[shape=circle,draw=black] (3) at (2,0) {3};
\node[shape=circle,draw=black] (4) at (3,0) {4};
\node[shape=circle,draw=black] (5) at (4,0) {5};
\node[shape=circle,draw=black] (6) at (5,0) {6};
\node[shape=circle,draw=black] (7) at (6,0) {7};
\node[shape=circle,draw=black] (8) at (7,0) {8};
\node[shape=circle,draw=black] (9) at (8,0) {9'};
\node[shape=circle,draw=black] (10) at (9,0) {10'};
\node[shape=circle,draw=black] (11) at (10,0) {11'};
\node[shape=circle,draw=black] (12) at (11,0) {12'};
\node[shape=circle,draw=black] (13) at (12,0) {13};
\node[shape=circle,draw=black] (14) at (13,0) {14};

\path [->] (1) edge (2);
\path [->] (2) edge (3);
\path [->] (3) edge (4);
\path [->] (4) edge (5);
\path [->] (5) edge (6);
\path [->] (6) edge (7);
\path [->] (7) edge (8);
\path [->] (8) edge (9);
\path [->] (9) edge (10);
\path [->] (10) edge (11);
\path [->] (11) edge (12);
\path [->] (12) edge (13);
\path [->] (13) edge (14);
\end{tikzpicture}
\item $\bar{\Xi}_{9,10;4}(f) = \bar{\Xi}_{9,10;4}(f)_4 = (7,7,11,5,1,13,12,12,1)$ on \\
\begin{tikzpicture}
\node[shape=circle,draw=black] (1) at (0,0) {1'};
\node[shape=circle,draw=black] (2) at (1,0) {2'};
\node[shape=circle,draw=black] (3) at (2,0) {3'};
\node[shape=circle,draw=black] (4) at (3,0) {4'};
\node[shape=circle,draw=black] (5) at (4,0) {5};
\node[shape=circle,draw=black] (6) at (5,0) {6};
\node[shape=circle,draw=black] (7) at (6,0) {7};
\node[shape=circle,draw=black] (8) at (7,0) {8};
\node[shape=circle,draw=black] (9) at (8,0) {9};
\node[shape=circle,draw=black] (10) at (9,0) {10};
\node[shape=circle,draw=black] (11) at (10,0) {11};
\node[shape=circle,draw=black] (12) at (11,0) {12};
\node[shape=circle,draw=black] (13) at (12,0) {13};
\node[shape=circle,draw=black] (14) at (13,0) {14};

\path [->] (1) edge (2);
\path [->] (2) edge (3);
\path [->] (3) edge (4);
\path [->] (4) edge (5);
\path [->] (5) edge (6);
\path [->] (6) edge (7);
\path [->] (7) edge (8);
\path [->] (8) edge (9);
\path [->] (9) edge (10);
\path [->] (10) edge (11);
\path [->] (11) edge (12);
\path [->] (12) edge (13);
\path [->] (13) edge (14);
\end{tikzpicture}
\end{enumerate}
\end{exm}

\begin{exm} 
Consider the following directed path $I$. \\
\begin{tikzpicture}
\node[shape=circle,draw=black] (1) at (0,0) {1};
\node[shape=circle,draw=black] (2) at (1,0) {2};
\node[shape=circle,draw=black] (3) at (2,0) {3};
\node[shape=circle,draw=black] (4) at (3,0) {4};
\node[shape=circle,draw=black] (5) at (4,0) {5};
\node[shape=circle,draw=black] (6) at (5,0) {6};
\node[shape=circle,draw=black] (7) at (6,0) {7};
\node[shape=circle,draw=black] (8) at (7,0) {8};
\node[shape=circle,draw=black] (9) at (8,0) {9};
\node[shape=circle,draw=black] (10) at (9,0) {10};

\path [->] (1) edge (2);
\path [->] (2) edge (3);
\path [->] (3) edge (4);
\path [->] (4) edge (5);
\path [->] (5) edge (6);
\path [->] (6) edge (7);
\path [->] (7) edge (8);
\path [->] (8) edge (9);
\path [->] (9) edge (10);
\end{tikzpicture}

For $f = (4,4,7,1,2,2,5,9,3,10) \in PF(10,10;4)$, we have the following:
\begin{enumerate}
\item $\bar{\Xi}_{10,10;4}(f)_1 = (7,7,4,10,9)$
\item $\bar{\Xi}_{10,10;4}(f)_2 = (7,7,11,5,6,2,5)$ on \\
\begin{tikzpicture}
\node[shape=circle,draw=black] (1) at (0,0) {1'};
\node[shape=circle,draw=black] (2) at (1,0) {2'};
\node[shape=circle,draw=black] (3) at (2,0) {3'};
\node[shape=circle,draw=black] (4) at (3,0) {4'};
\node[shape=circle,draw=black] (5) at (4,0) {5};
\node[shape=circle,draw=black] (6) at (5,0) {6};
\node[shape=circle,draw=black] (7) at (6,0) {7};
\node[shape=circle,draw=black] (8) at (7,0) {8};
\node[shape=circle,draw=black] (9) at (8,0) {9};
\node[shape=circle,draw=black] (10) at (9,0) {10};
\node[shape=circle,draw=black] (11) at (10,0) {11};
\node[shape=circle,draw=black] (12) at (11,0) {12};
\node[shape=circle,draw=black] (13) at (12,0) {13};
\node[shape=circle,draw=black] (14) at (13,0) {14};

\path [->] (1) edge (2);
\path [->] (2) edge (3);
\path [->] (3) edge (4);
\path [->] (4) edge (5);
\path [->] (5) edge (6);
\path [->] (6) edge (7);
\path [->] (7) edge (8);
\path [->] (8) edge (9);
\path [->] (9) edge (10);
\path [->] (10) edge (11);
\path [->] (11) edge (12);
\path [->] (12) edge (13);
\path [->] (13) edge (14);
\end{tikzpicture}
\item $\bar{\Xi}_{10,10;4}(f)_3 = (11,11,4,9,10,6,9,2)$ on \\
\begin{tikzpicture}
\node[shape=circle,draw=black] (1) at (0,0) {1};
\node[shape=circle,draw=black] (2) at (1,0) {2};
\node[shape=circle,draw=black] (3) at (2,0) {3};
\node[shape=circle,draw=black] (4) at (3,0) {4};
\node[shape=circle,draw=black] (5) at (4,0) {5'};
\node[shape=circle,draw=black] (6) at (5,0) {6'};
\node[shape=circle,draw=black] (7) at (6,0) {7'};
\node[shape=circle,draw=black] (8) at (7,0) {8'};
\node[shape=circle,draw=black] (9) at (8,0) {9};
\node[shape=circle,draw=black] (10) at (9,0) {10};
\node[shape=circle,draw=black] (11) at (10,0) {11};
\node[shape=circle,draw=black] (12) at (11,0) {12};
\node[shape=circle,draw=black] (13) at (12,0) {13};
\node[shape=circle,draw=black] (14) at (13,0) {14};

\path [->] (1) edge (2);
\path [->] (2) edge (3);
\path [->] (3) edge (4);
\path [->] (4) edge (5);
\path [->] (5) edge (6);
\path [->] (6) edge (7);
\path [->] (7) edge (8);
\path [->] (8) edge (9);
\path [->] (9) edge (10);
\path [->] (10) edge (11);
\path [->] (11) edge (12);
\path [->] (12) edge (13);
\path [->] (13) edge (14);
\end{tikzpicture}
\item $\bar{\Xi}_{10,10;4}(f)_4 = (7,7,11,5,7,2,5,13,3)$ on \\
\begin{tikzpicture}
\node[shape=circle,draw=black] (1) at (0,0) {1'};
\node[shape=circle,draw=black] (2) at (1,0) {2'};
\node[shape=circle,draw=black] (3) at (2,0) {3'};
\node[shape=circle,draw=black] (4) at (3,0) {4'};
\node[shape=circle,draw=black] (5) at (4,0) {5};
\node[shape=circle,draw=black] (6) at (5,0) {6};
\node[shape=circle,draw=black] (7) at (6,0) {7};
\node[shape=circle,draw=black] (8) at (7,0) {8};
\node[shape=circle,draw=black] (9) at (8,0) {9};
\node[shape=circle,draw=black] (10) at (9,0) {10};
\node[shape=circle,draw=black] (11) at (10,0) {11};
\node[shape=circle,draw=black] (12) at (11,0) {12};
\node[shape=circle,draw=black] (13) at (12,0) {13};
\node[shape=circle,draw=black] (14) at (13,0) {14};

\path [->] (1) edge (2);
\path [->] (2) edge (3);
\path [->] (3) edge (4);
\path [->] (4) edge (5);
\path [->] (5) edge (6);
\path [->] (6) edge (7);
\path [->] (7) edge (8);
\path [->] (8) edge (9);
\path [->] (9) edge (10);
\path [->] (10) edge (11);
\path [->] (11) edge (12);
\path [->] (12) edge (13);
\path [->] (13) edge (14);
\end{tikzpicture}
\item $\bar{\Xi}_{10,10;4}(f)_5 = (9,9,13,7,8,4,7,2,5,1)$ on \\
\begin{tikzpicture}
\node[shape=circle,draw=black] (1) at (0,0) {1};
\node[shape=circle,draw=black] (2) at (1,0) {2};
\node[shape=circle,draw=black] (3) at (2,0) {3'};
\node[shape=circle,draw=black] (4) at (3,0) {4'};
\node[shape=circle,draw=black] (5) at (4,0) {5'};
\node[shape=circle,draw=black] (6) at (5,0) {6'};
\node[shape=circle,draw=black] (7) at (6,0) {7};
\node[shape=circle,draw=black] (8) at (7,0) {8};
\node[shape=circle,draw=black] (9) at (8,0) {9};
\node[shape=circle,draw=black] (10) at (9,0) {10};
\node[shape=circle,draw=black] (11) at (10,0) {11};
\node[shape=circle,draw=black] (12) at (11,0) {12};
\node[shape=circle,draw=black] (13) at (12,0) {13};
\node[shape=circle,draw=black] (14) at (13,0) {14};

\path [->] (1) edge (2);
\path [->] (2) edge (3);
\path [->] (3) edge (4);
\path [->] (4) edge (5);
\path [->] (5) edge (6);
\path [->] (6) edge (7);
\path [->] (7) edge (8);
\path [->] (8) edge (9);
\path [->] (9) edge (10);
\path [->] (10) edge (11);
\path [->] (11) edge (12);
\path [->] (12) edge (13);
\path [->] (13) edge (14);
\end{tikzpicture}
\item $\bar{\Xi}_{10,10;4}(f) = \bar{\Phi}_{10,14;4}(\bar{\Xi}_{10,10;4}(f)_5) = (7,7,11,5,6,2,5,13,3,10)$ on \\
\begin{tikzpicture}
\node[shape=circle,draw=black] (1) at (0,0) {1'};
\node[shape=circle,draw=black] (2) at (1,0) {2'};
\node[shape=circle,draw=black] (3) at (2,0) {3'};
\node[shape=circle,draw=black] (4) at (3,0) {4'};
\node[shape=circle,draw=black] (5) at (4,0) {5};
\node[shape=circle,draw=black] (6) at (5,0) {6};
\node[shape=circle,draw=black] (7) at (6,0) {7};
\node[shape=circle,draw=black] (8) at (7,0) {8};
\node[shape=circle,draw=black] (9) at (8,0) {9};
\node[shape=circle,draw=black] (10) at (9,0) {10};
\node[shape=circle,draw=black] (11) at (10,0) {11};
\node[shape=circle,draw=black] (12) at (11,0) {12};
\node[shape=circle,draw=black] (13) at (12,0) {13};
\node[shape=circle,draw=black] (14) at (13,0) {14};

\path [->] (1) edge (2);
\path [->] (2) edge (3);
\path [->] (3) edge (4);
\path [->] (4) edge (5);
\path [->] (5) edge (6);
\path [->] (6) edge (7);
\path [->] (7) edge (8);
\path [->] (8) edge (9);
\path [->] (9) edge (10);
\path [->] (10) edge (11);
\path [->] (11) edge (12);
\path [->] (12) edge (13);
\path [->] (13) edge (14);
\end{tikzpicture}
\end{enumerate}
\end{exm}

\begin{thm} 
\label{obstr inj}
$\bar{\Xi}_{m,n;k}: PF(m,n;k) \rightarrow LPF(m,n+k;k)$ is an injection. 
\end{thm}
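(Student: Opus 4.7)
The plan is to establish that $\bar{\Xi}_{m,n;k}$ is well-defined into $LPF(m,n+k;k)$ and then prove injectivity by reversing the staged construction, as in the inverse built for Theorem \ref{cncorr}. I would first prove well-definedness by induction on the stage $i$, maintaining the invariant from Remark \ref{obstrem}: after stage $i \geq 2$, the obstruction components of $\bar{\Xi}_{m,n;k}(f)_i$ coincide with the Naples components of $f_1 \oplus \cdots \oplus f_i$ when $i$ is even, and with their reflection when $i$ is odd. The inductive step exploits that $\bar{\Phi}_{\ell,n+k;k}$ is an involution preserving traverse path lengths and obstruction components, hence sends valid obstructed parking functions to valid ones. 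The cars from $\nu_i$ (for odd $i$) or from $f_i$ (for even $i$) then park correctly by the same parity reasoning used for $\Xi_{m,n;k}$. For odd $d$, the final application of $\bar{\Phi}_{m,n+k;k}$ moves the obstructed block $\bar{k}$ onto the first $k$ vertices, placing the output in $LPF(m,n+k;k)$.

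For injectivity, I would split the image according to whether any car prefers an obstructed vertex. If $f \in B(m,n;k)$, then $\bar{\Xi}_{m,n;k}(f) = \iota_k[\Xi_{m,n;k}(f)]$ shifts every preference by $+k$, so no car prefers an obstructed vertex; injectivity here follows from Theorem \ref{cncorr} together with the obvious invertibility of $\iota_k$. If $f \in PF(m,n;k) - B(m,n;k)$, then some even-indexed sub-tuple $f_i$ contains a car whose preference $a \leq k$ in $f$ would have required exiting during backward search; tracing through the construction shows that this car's preference in $\bar{\Xi}_{m,n;k}(f)$ lies on a vertex of the obstruction component containing $\bar{k}$ that is itself obstructed after the final reflection. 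Hence the images of $B(m,n;k)$ and $PF(m,n;k) - B(m,n;k)$ under $\bar{\Xi}_{m,n;k}$ are disjoint, and it suffices to prove injectivity separately on each.

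On the non-contained part, I would mimic the inverse procedure from the proof of Theorem \ref{cncorr}: given $g$ in the image, first recognize from the orientation of the obstruction component containing $\bar{k}$ whether the optional final $\bar{\Phi}_{m,n+k;k}$ was applied (i.e.\ whether $d$ was odd), and undo it if so; then iteratively identify and strip the last sub-tuple by partitioning the current tuple at the unique point where Naples traverse path lengths cross $k$, undoing either $p \mapsto n+1-p$ (odd stages) or the identity on $\nu_i = f_i$ (even stages), and applying $\bar{\Phi}$ to the remaining prefix. The main obstacle is verifying that this partition criterion is unambiguous in the obstructed setting: I must show that the last car of the previous sub-tuple and the first car of the current sub-tuple always have Naples traverse path lengths on opposite sides of $k$ in $\bar{\Xi}_{m,n;k}(f)$, despite the fact that obstructed vertices now contribute to traverse paths. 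This reduces to carefully tracking how Naples traverse paths of cars in each $f_i$ relate, through the sequence of $\bar{\Phi}$'s and the single $\iota_k$, to the corresponding traverse paths in $\bar{\Xi}_{m,n;k}(f)$; once this is settled, iterating the procedure uniquely recovers $f_d, f_{d-1}, \ldots, f_1$ in reverse order, and hence $f$.
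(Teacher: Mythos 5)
Your proposal matches the paper on three of its four ingredients: well-definedness via the component-tracking invariant of Remark \ref{obstrem}, disjointness of the images of $B(m,n;k)$ and its complement via cars preferring vertices of $[k]$, and injectivity on $B(m,n;k)$ via Theorem \ref{cncorr} together with $\iota_k$. Where you genuinely diverge is injectivity on $PF(m,n;k)-B(m,n;k)$: you propose an explicit inverse in the style of the proof of Theorem \ref{cncorr}, whereas the paper argues by contradiction. It assumes $\bar{\Xi}_{m,n;k}(f)=\bar{\Xi}_{m,n;k}(g)$ with $f \neq g$, takes $j$ maximal with the preferences of car $j$ differing, notes that the quantities preserved by the construction (traverse path lengths, and preference differences within a common obstruction component) force the two $k$-decompositions to have the same shape, and then tracks the obstruction component in which car $j$ parks: it occupies different vertex sets in the two constructions, and either no later car touches it (so the final outputs differ) or the first later car to touch it—having identical preferences in $f$ and $g$—produces differing obstruction components, a contradiction. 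Your route, if completed, buys a little more (an effective description of the image and of the inverse map), but its entire weight rests on the verification you explicitly defer: that in $\bar{\Xi}_{m,n;k}(f)$ the cars of odd-indexed sub-tuples have traverse path length at most $k$ while those of even-indexed sub-tuples have length exceeding $k$ (the latter because an even-stage car, which keeps its literal preference, must still sweep the $k$ occupied-or-obstructed vertices preceding its original spot before moving on). Since $\bar{\Phi}_{i,n+k;k}$ and $\iota_k$ preserve traverse path lengths, this is the same statistic that drives the inverse in Theorem \ref{cncorr}, so the plan is sound in principle.

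One step of your plan is wrong as stated: you cannot ``recognize from the orientation of the obstruction component containing $\bar{k}$'' whether the final $\bar{\Phi}_{m,n+k;k}$ was applied. For both parities of $d$ the output lies in $LPF(m,n+k;k)$, so $\bar{k}$ is always the leftmost $k$ vertices, and a block of indistinguishable obstructions carries no orientation data. The correct detector—the one the paper itself uses to see that the $d$ odd and $d$ even cases have disjoint outputs—is the traverse path length of the last car $m$: it exceeds $k$ if and only if $d$ is even, since car $m$ lies in $f_d$ and all the reflections preserve traverse path lengths. With that substitution your first step becomes: if car $m$ traverses at most $k$ vertices, apply $\bar{\Phi}_{m,n+k;k}$ once to undo the final reflection, then strip sub-tuples as you describe. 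So your proposal is repairable and genuinely different from the paper's argument, but as written it contains this one faulty step, and the central unambiguity claim on which the stripping procedure depends is asserted rather than proved.
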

\begin{proof}
For $g_1 \in B(m,n;k)$ and $g_2 \in PF(m,n;k)-B(m,n;k)$, $\bar{\Xi}_{m,n;k}(g_2)$ has cars preferring vertices in $[k]$ while $\bar{\Xi}_{m,n;k}(g_1)$ does not. Hence $\bar{\Xi}_{m,n;k}[B(m,n;k)]$ and $\bar{\Xi}_{m,n;k}[PF(m,n;k)-B(m,n;k)]$ do not intersect. We know that $\Xi_{m,n;k}$ is a bijection between $B(m,n;k)$ and $PF(m,n)$, and $\iota_k$ naturally identifies $PF(m,n)$ as the subset of $LPF(m,n+k;k)$ where cars do not prefer the obstructed vertices. It remains to prove the claim for the case $\bar{\Xi}_{m,n;k} \mid PF(m,n;k)-B(m,n;k)$.

Let $f \in PF(m,n;k)-B(m,n;k)$ with $k$-decomposition $f = f_1 \oplus f_2 \oplus \cdots \oplus f_d$. Notice that the obstruction components of $\bar{\Xi}_{m,n;k}(f)_1$ are the reflection of those of $f \mid [\mathrm{len}(f_1)]$. For $\bar{\Xi}_{m,n;k}(f)_2 = \iota_k[\Phi_{\mathrm{len}(f_1),n}(\bar{\Xi}_{m,n;k}(f)_1)] \oplus f_2$, notice that $\Phi_{\mathrm{len}(f_1),n}(\bar{\Xi}_{m,n;k}(f)_1)$ has exactly the same obstruction components as $f \mid [\mathrm{len}(f_1)]$ and that $\iota_k$ merely ``adds $k$ obstructions to the left'', so the cars of $f_2$ park in $\bar{\Xi}_{m,n;k}(f)_2$ exactly as they do in $f$.

In general, it is easy to observe the following: 
\begin{enumerate}
\item For $i$ even, the obstructions components of $\bar{\Phi}_{\sum_{u=1}^{i-1}{\mathrm{len(f_u)}},n+k;k}(\bar{\Xi}_{m,n;k}(f)_{i-1})$ coincide with the Naples components of $f$ restricted to $[\sum_{u=1}^{i-1}{\mathrm{len(f_u)}}]$ (shifted forward $k$ spots), with the exception of the first obstruction component which is the first Naples component of $f \mid [\sum_{u=1}^{i-1}{\mathrm{len(f_u)}}]$ lengthened by the $k$ obstructions. Hence the cars of $f_i$ park in $\bar{\Xi}_{m,n;k}(f)_i$ exactly as they do in $f$.
\item For $i \geq 3$ odd, the obstruction components of $\bar{\Phi}_{\sum_{u=1}^{i-1}{\mathrm{len(f_u)}},n+k;k}(\bar{\Xi}_{m,n;k}(f)_{i-1})$ are the reflection of the Naples components of $f$ restricted to $[\sum_{u=1}^{i-1}{\mathrm{len(f_u)}}]$, with the exception of the last obstruction component which is the reflection of the first Naples component of $f \mid [\sum_{u=1}^{i-1}{\mathrm{len(f_u)}}]$ lengthened by the $k$ obstructions. Hence the cars of $f_i$ have traverse paths in $\bar{\Xi}_{m,n;k}(f)_i$ that are the reflection of those in $f$.
\end{enumerate}

If $d$ is even, then $\bar{\Xi}_{m,n;k}(f) = \bar{\Xi}_{m,n;k}(f)_d$ has all its obstructions on the left. If $d$ is odd, then $\bar{\Xi}_{m,n;k}(f) := \bar{\Phi}_{m,n+k;k}(\bar{\Xi}_{m,n;k}(f)_d)$ ensures that all its obstructions are on the left. These two cases clearly do not intersect, since car $m$ in one traverses more than $k$ vertices while car $m$ in the other does not. In both cases, the obstruction components of $\bar{\Xi}_{m,n;k}(f)$ coincide with the Naples components of $f$, excepting the $k$ obstructions themselves. Since $f$ is a parking function, $\bar{\Xi}_{m,n;k}(f)$ thus defined is an element of $LPF(m,n+k;k)$.

To prove the injectivity of $\bar{\Xi}_{m,n;k}$, we first introduce some notations. Since car preferences are included sequentially in the procedure defining $\bar{\Xi}_{m,n;k}(f)$, let $\bar{\Xi}_{m,n;k}(f)^c$ denote the stage in the construction of $\bar{\Xi}_{m,n;k}(f)$ after the preference of car $c$ has been included. If $c \in [\sum_{u=1}^{i}{\mathrm{len}(f_u)}] - [\sum_{u=1}^{i-1}{\mathrm{len}(f_u)}]$, then $\bar{\Xi}_{m,n;k}(f)^c$ is a $c$-tuple that is a sub-tuple of $\bar{\Xi}_{m,n;k}(f)_i$.

For any car $a \leq c \in [m]$, we denote by $\mathrm{comp}^c(a,f)$ the obstruction component of $\bar{\Xi}_{m,n;k}(f)^c$ where $a$ parks. We also denote by $\mathrm{comp}(a,f)$ the obstruction component of $\bar{\Xi}_{m,n;k}(f)$ where car $a$ parks. If $p_a$, $p_b$ are the preferences of cars $a$, $b$ respectively in $\bar{\Xi}_{m,n;k}(f)^c$, let $\mathrm{diff}^c(a,b;f) = p_b-p_a$ which is their difference in $\bar{\Xi}_{m,n;k}(f)^c$. If $p'_a$, $p'_b$ are the preferences of cars $a$, $b$ respectively in $\bar{\Xi}_{m,n;k}(f)$, let $\mathrm{diff}(a,b;f) = p'_b-p'_a$. We denote by $\mathrm{tpath}^c(a,\bar{\Xi})$ the path car $a$ must traverse before parking in $\bar{\Xi}_{m,n;k}(f)^c$, and by $\mathrm{tpath}(a,\bar{\Xi})$ the traverse path of $a$ in $\bar{\Xi}_{m,n;k}(f)$.

Suppose that $p_a$, $p_b$ are both in the same obstruction component in $\bar{\Xi}_{m,n;k}(f)^c$, then we have $\mathrm{diff}^c(a,b;f) = \mathrm{diff}^{d'}(a,b;f)$ for any $d' > c$, and $\mathrm{diff}^c(a,b;f) = \mathrm{diff}(a,b;f)$. In other words, the difference between any two preferences in the same obstruction component is preserved by the procedure defining $\bar{\Xi}_{m,n;k}(f)$.

It is also clear that $\mathrm{tpath}^c(a,\bar{\Xi})$, $\mathrm{tpath}^{d'}(a,\bar{\Xi})$, and $\mathrm{tpath}(a,\bar{\Xi})$ all have the same length, for any $d' > c$.  In other words, the procedure defining $\bar{\Xi}_{m,n;k}(f)$ preserves the traverse path length of each car.

Assume for a contradiction that $f \neq g \in PF(m,n;k)-B(m,n;k)$ but $\bar{\Xi}_{m,n;k}(f) = \bar{\Xi}_{m,n;k}(g)$. Let $f = f_1 \oplus f_2 \oplus \ldots \oplus f_{d_1}$ and $g = g_1 \oplus g_2 \oplus \ldots \oplus g_{d_2}$ be their respective $k$-decompositions. Let $j \in [m]$ be maximal such that car $j$ has different preferences in $f$ and $g$; label these preferences as $p_j$ and $q_j$ respectively, with $f_{c_1}$ containing $p_j$ and $g_{c_2}$ containing $q_j$.

The above observations force $d_1 = d_2$, $c_1 = c_2$, and $\mathrm{len}(f_l) = \mathrm{len}(g_l)$ for all $l \in [d_1]$. Let $p'_j$ and $q'_j$ denote the preferences of car $j$ in $\bar{\Xi}_{m,n;k}(f)_{c_1}$ and $\bar{\Xi}_{m,n;k}(g)_{c_1}$ respectively. By assumption, car $l$ has the same preference in both $f$ and $g$, for any $l > j$. We show that the obstruction components of car $j$ will ultimately differ in $\bar{\Xi}_{m,n;k}(f)$ and $\bar{\Xi}_{m,n;k}(g)$.

Car $j$ will have the same traverse path length $t_j \geq 0$ in both $\bar{\Xi}_{m,n;k}(f)_{c_1}$ and $\bar{\Xi}_{m,n;k}(g)_{c_1}$. For every car $c \in [j]$, $c$ is in $\mathrm{comp}^j(j,g)$ if and only if $c$ is in $\mathrm{comp}^j(j,f)$, and we have $\mathrm{diff}^j(j,c;g) = \mathrm{diff}^j(j,c;f)$; otherwise, 
the cars in $[j]$ would yield different obstruction components in $\bar{\Xi}_{m,n;k}(f)\mid[j]$ and $\bar{\Xi}_{m,n;k}(g)\mid[j]$, which is a contradiction. Hence the car preference placements are the same in $\mathrm{comp}^j(j,g)$ and $\mathrm{comp}^j(j,f)$, though these two components occupy different sets of vertices of $I$ due to divergent preferences of car $j$.

Then the procedure will reflect $\mathrm{comp}^j(j,f)$ and $\mathrm{comp}^j(j,g)$ a number of times; an even number of reflections will restore the original positions of $\mathrm{comp}^j(j,f)$ and $\mathrm{comp}^j(j,g)$ provided no new cars will be added. If the procedure adds no new cars to $\mathrm{comp}^j(j,f)$ and $\mathrm{comp}^j(j,g)$, then $\mathrm{comp}(j,g)$ and $\mathrm{comp}(j,f)$ would still occupy different sets of vertices of $I$ in $\bar{\Xi}_{m,n;k}(g)$ and $\bar{\Xi}_{m,n;k}(f)$ respectively. If the traverse path of car $j' > j$ intersects $\mathrm{comp}^j(j,f)$ or $\mathrm{comp}^j(j,g)$, then the cars in $[j']$ would yield different obstruction components in $\bar{\Xi}_{m,n;k}(f)\mid[j']$ and $\bar{\Xi}_{m,n;k}(g)\mid[j']$, because $j'$ has the same preference in both situations by assumption.

Therefore, in all cases we have $\bar{\Xi}_{m,n;k}(f) \neq \bar{\Xi}_{m,n;k}(g)$, which is the desired contradiction. This shows that $\bar{\Xi}_{m,n;k}$ is indeed injective.
\end{proof}
\begin{cor}
\label{gen bound}
We have $$|PF(m,n;k)| \leq |LPF(m,n+k;k)|,$$ with equality only if $k=0$.
\end{cor}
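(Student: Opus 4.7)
The non-strict inequality is immediate from Theorem \ref{obstr inj}, which furnishes the injection $\bar{\Xi}_{m,n;k}: PF(m,n;k) \hookrightarrow LPF(m,n+k;k)$. When $k = 0$ there are no obstructed vertices, so both sides equal $|PF(m,n)|$ and equality holds; the substantive content of the corollary is thus the strict inequality when $k \geq 1$.

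My plan for the case $k \geq 1$ is to exhibit a concrete element of $LPF(m,n+k;k)$ that lies outside the image of $\bar{\Xi}_{m,n;k}$. The candidate I would use is $g = (1, k+1, k+2, \ldots, k+m-1)$, which belongs to $LPF(m,n+k;k)$ since car $1$ prefers the obstructed vertex $1$, drives past the $k$ obstructions, and parks at $k+1$, while each subsequent car finds its preferred spot occupied by its predecessor and parks at the next available vertex (this works since $m \leq n$).

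To show that $g$ is not in the image, I invoke the observation made inside the proof of Theorem \ref{obstr inj} that the construction defining $\bar{\Xi}_{m,n;k}(f)$ preserves the traverse path length of every car. Car $1$ in any $f \in PF(m,n;k)$ is the first to park and therefore parks instantly at its preferred vertex, so its Naples traverse path has length one. By the length-preservation property, car $1$ in $\bar{\Xi}_{m,n;k}(f)$ must likewise have traverse path of length one, which forces its preferred vertex in the image to be unobstructed. Since the first coordinate of $g$ is the obstructed vertex $1$, $g$ cannot occur as $\bar{\Xi}_{m,n;k}(f)$ for any $f$, yielding strict inequality.

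I do not anticipate any serious obstacle: the only bookkeeping required is to confirm that length-preservation applies to car $1$ at every stage of the construction, which is immediate from the fact that each of the building blocks $\iota_k$, $\Phi_{i,n}$, and $\bar{\Phi}_{i,n+k;k}$ preserves traverse path lengths by definition.
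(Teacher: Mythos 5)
Your proof is correct and takes essentially the same route as the paper: the paper also gets the weak inequality directly from Theorem \ref{obstr inj} and obtains strictness for $k \geq 1$ by observing that no element of $LPF(m,n+k;k)$ whose car $1$ prefers an obstructed vertex can lie in the image of $\bar{\Xi}_{m,n;k}$. Your explicit witness $g = (1, k+1, \ldots, k+m-1)$ and the traverse-path-length-preservation argument merely spell out details the paper leaves implicit.
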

\begin{proof}
The bound $|PF(m,n;k)| \leq |LPF(m,n+k;k)|$ follows immediately from Theorem \ref{obstr inj}. For any $k \geq 1$, no element of $LPF(m,n+k;k)$ whose car $1$ prefers an obstructed vertex will be in $\bar{\Xi}_{m,n;k}(PF(m,n;k))$, so $|PF(m,n;k)| < |LPF(m,n+k;k)|$.
\end{proof}


Lastly, consider the case $m = n$, where we denote $LPF_{n+k,k} := LPF(n,n+k;k)$ and $PF_{n,k} := PF(n,n;k)$. The main result (Theorem 1.2) of \cite{Ehren} yields the following special case
\begin{prop}
\label{ktrailer}
We have \[|LPF_{n+k,k}| 
= (k+1)(k+n+1)^{n-1}. \]
\end{prop}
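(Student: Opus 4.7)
The plan is to recognize Proposition \ref{ktrailer} as a direct specialization of Theorem~1.2 of \cite{Ehren}, which enumerates parking functions in which $n$ cars park on a one-way lot after a fixed trailer occupying consecutive spots at the front. The definition of $LPF_{n+k,k}$ used in this paper—$n$ cars on $n+k$ vertices whose first $k$ are obstructed, with cars driving only forward—coincides precisely with the trailer setting of \cite{Ehren} when the trailer has length $k$ and there are $n$ post-trailer parking spots.

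First, I would reconcile the notations and preference conventions. In $LPF_{n+k,k}$, cars are permitted to prefer any vertex of $[n+k]$, including obstructed ones, and in that case they simply drive past the obstruction to the first available spot. This matches the trailer model of \cite{Ehren}, so no adjustment to preference ranges or driving rules is required. Substituting the parameters (number of cars $n$, trailer length $k$) into the closed formula of Theorem~1.2 of \cite{Ehren} then yields the claimed value $(k+1)(k+n+1)^{n-1}$ immediately.

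If a self-contained derivation were preferred, an alternative plan is to define the surjection $\pi : LPF_{n+k,k} \to PF(n,n)$ by $\pi(a_1,\ldots,a_n)_i = \max(1,\, a_i - k)$, note that the fibre over $h \in PF(n,n)$ has cardinality $(k+1)^{c(h)}$ where $c(h)$ counts the entries of $h$ equal to $1$ (since any such entry lifts to any of the $k+1$ preferences in $\{1,2,\ldots,k+1\}$, all of which deposit the car at the first post-trailer vertex), and then apply the refined enumeration $\sum_{h \in PF(n,n)} t^{c(h)} = t\,(t+n)^{n-1}$ evaluated at $t=k+1$. This last identity is a standard consequence of the bijection between $PF(n,n)$ and rooted labeled trees on $n+1$ vertices, under which $c(h)$ corresponds to the degree of the designated root $0$, giving $\binom{n-1}{j-1}\,n^{n-j}$ parking functions with exactly $j$ ones and then the binomial theorem. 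The main technical point along this alternative route is pinning down the correspondence between ``number of ones in $h$'' and ``degree of the root,'' which is standard but does require some care; by contrast, the citation-based route has no substantive obstacle beyond verifying that the preference conventions align, so I would choose that.
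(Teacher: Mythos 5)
Your primary route is exactly what the paper does: Proposition \ref{ktrailer} is proved there by directly specializing Theorem~1.2 of \cite{Ehren} to a trailer of length $k$ at the front of the lot with $n$ cars, after observing that the $LPF_{n+k,k}$ conventions (cars may prefer obstructed vertices and drive past them) coincide with the trailer model. Your alternative fibre-counting argument via $\pi(a_1,\ldots,a_n)_i = \max(1,a_i-k)$ and the identity $\sum_{h \in PF(n,n)} t^{c(h)} = t(t+n)^{n-1}$ is also correct, but it goes beyond what the paper needs, so the citation-based route you chose is the right match.
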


Corollary \ref{gen bound} and Proposition \ref{ktrailer} imply the following 
\begin{thm} 
\label{concr bound}
We have \[|PF_{n,k}| \leq (k+1)(k+n+1)^{n-1}, \] with equality only if $k=0$.
\end{thm}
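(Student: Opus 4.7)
The plan is to derive Theorem \ref{concr bound} as an immediate consequence of the two results that precede it in this section, namely Corollary \ref{gen bound} and Proposition \ref{ktrailer}. Since both do essentially all of the work, the proof amounts to a careful specialization and a substitution.

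First I would specialize Corollary \ref{gen bound} to the square case $m = n$. Writing $PF_{n,k} = PF(n,n;k)$ and $LPF_{n+k,k} = LPF(n,n+k;k)$ as in the statement, Corollary \ref{gen bound} gives
\[
|PF_{n,k}| \leq |LPF_{n+k,k}|,
\]
with strict inequality whenever $k \geq 1$. The strictness is exactly the content of the final sentence of the proof of Corollary \ref{gen bound}: for $k \geq 1$, any element of $LPF_{n+k,k}$ whose first car prefers one of the $k$ obstructed vertices lies outside $\bar{\Xi}_{n,n;k}(PF_{n,k})$, and such elements exist.

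Next I would invoke Proposition \ref{ktrailer}, which evaluates the right-hand side as
\[
|LPF_{n+k,k}| = (k+1)(k+n+1)^{n-1}.
\]
Combining the two yields $|PF_{n,k}| \leq (k+1)(k+n+1)^{n-1}$, with equality only if $k = 0$. As a sanity check of the equality case: when $k = 0$ the bound specializes to $(n+1)^{n-1}$, which is exactly $|PF_n|$ by the classical count $|PF(m,n)| = (n-m+1)(n+1)^{m-1}$ recalled in the introduction, confirming that equality genuinely holds there.

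There is essentially no technical obstacle specific to this theorem; the substance was already absorbed into Theorem \ref{obstr inj} (the injectivity of $\bar{\Xi}_{m,n;k}$) and Proposition \ref{ktrailer} (the appeal to the main result of \cite{Ehren}). The only point worth being careful about is correctly tracking the ``only if $k = 0$'' clause so that it survives the substitution; this is immediate from the corresponding clause in Corollary \ref{gen bound} together with the fact that Proposition \ref{ktrailer} is an equality.
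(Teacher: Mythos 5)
Your proposal is correct and matches the paper exactly: the paper derives Theorem \ref{concr bound} as an immediate consequence of Corollary \ref{gen bound} (specialized to $m=n$) combined with the evaluation $|LPF_{n+k,k}| = (k+1)(k+n+1)^{n-1}$ from Proposition \ref{ktrailer}, with the ``only if $k=0$'' clause inherited from the strictness statement in Corollary \ref{gen bound}. Your added sanity check of the $k=0$ equality case is a nice touch but not needed.
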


\end{document}